\newcommand*{\rom}[1]{\expandafter\@slowromancap\romannumeral #1@}
\newenvironment{breakablealgorithm}
{
	\begin{center}
		\refstepcounter{algorithm}
		\hrule height.8pt depth0pt \kern2pt
		\renewcommand{\caption}[2][\relax]{
			{\raggedright\textbf{\ALG@name~\thealgorithm} ##2\par}%
			\ifx\relax##1\relax 
			\addcontentsline{loa}{algorithm}{\protect\numberline{\thealgorithm}##2}%
			\else 
			\addcontentsline{loa}{algorithm}{\protect\numberline{\thealgorithm}##1}%
			\fi
			\kern2pt\hrule\kern2pt
		}
	}{
		\kern2pt\hrule\relax
	\end{center}
}
\newcommand{\Z}{\mathbb{Z}}
\newcommand{\R}{\mathbb{R}}
\newcommand{\N}{\mathbb{N}}
\newcommand{\LM}{\mathcal{L}}
\newcommand{\dt}{\delta}
\newcommand{\Dt}{\Delta}
\newcommand{\D}{\mathcal{D}}
\newcommand{\E}{\mathcal{E}}
\newcommand{\V}{\mathcal{V}}
\newcommand{\Gr}{\mathcal{G}}
\newcommand{\W}{\mathcal{W}}
\newcommand{\Ccal}{\mathcal{C}}
\newcommand{\Sig}{\Sigma}
\newcommand{\col}{\operatorname{Col}}
\newcommand{\Prob}{\operatorname{Prob}}
\newcommand{\outdeg}{\operatorname{outdeg}}
\newcommand{\red}{\color{red}}
\newcommand{\blue}{\color{blue}}
\definecolor{green}{rgb}{0.1,0.7,0.1}
\newcommand{\green}{\color{green}}
\newcommand{\brown}{\color{brown}}
\newcommand{\algorithmicinit}{\textbf{initialization: }}
\newcommand{\init}{\STATE \algorithmicinit}
\newcommand{\algorithmicstop}{\textbf{stop}}
\newcommand{\STOP}{\STATE \algorithmicstop}
\newcommand{\PTIME}{\mathsf{P}}
\newcommand{\NP}{\mathsf{NP}}
\newcommand{\llb}{\llbracket}
\newcommand{\rrb}{\rrbracket}
\newcounter{enumi_saved}
\newenvironment{myenumerate} {
    \begin{enumerate}[(i)]\setcounter{enumi}{\value{enumi_saved}}}
    {\setcounter{enumi_saved}{\value{enumi}}\end{enumerate}}
\newtheorem{theorem}{Theorem}[section]
\newtheorem{lemma}[theorem]{Lemma}
\newtheorem{definition}[theorem]{Definition}
\newtheorem{proposition}[theorem]{Proposition}
\newtheorem{example}[theorem]{Example}
\newtheorem{remark}{Remark}[section]
\begin{document}

\begin{frontmatter}

\title{Synthesis for observability of logical control networks\thanksref{footnoteinfo}} 

\thanks[footnoteinfo]{
The author discussed these issues with Dr. Karl Henrik Johansson at KTH Royal Institute of Technology,
and is grateful to him for his suggestions.
}

\author[SurreyU]{Kuize Zhang}\ead{kuize.zhang@surrey.ac.uk}    

\address[SurreyU]{
Department of Computer Science, University of Surrey, Guildford GU2 7XH, UK}

\begin{keyword}                           
	logical control network,  observability, state feedback, synthesis, semitensor product, observability graph

\end{keyword}                             

\begin{abstract}                          
	Finite-state systems have applications in systems biology, formal verification and synthesis 
	of infinite-state (hybrid) systems, etc.
	As deterministic finite-state systems, logical control networks (LCNs) consist of a finite number of nodes
	which can be in a finite number of states and update their states.
	In this paper, we investigate the synthesis problem for observability 
	of LCNs based on state feedback with exogenous input by using the semitensor product proposed by 
	Daizhan Cheng and the notion of observability graph (previously called weighted pair graph) proposed
	by us.
	We show that state feedback with exogenous input can either enforce or weaken observability of an LCN.
	We prove that for an LCN $\Sig$ and another closed-loop 
	LCN $\Sig_{\Ccal}$ obtained by feeding
	a state-feedback controller $\Ccal$ with exogenous input into $\Sig$,
	(1) if $\Sig$ is observable, then $\Sig_{\Ccal}$ can be either observable or not;
	(2) if $\Sig$ is not observable, $\Sig_{\Ccal}$ can also be observable or not. 
	We also prove that if an unobservable LCN can be made observable by state feedback
	with exogenous input, then it can also be made observable by state feedback 
	(without exogenous input, equivalent to state feedback with constant input).
	Furthermore, we give an upper bound on the number of state-feedback controllers that are needed
	to be tested in order to verify whether an unobservable LCN can be 
	made observable by state feedback, and based on the procedure of obtaining the upper bound,
	we design an observability synthesis algorithm, by additionally combining the ideas of a greedy
	algorithm and dynamic programming. These results open the study of observability synthesis in LCNs.
\end{abstract}

\end{frontmatter}

\section{Introduction}

\subsection{Background}

Finite-state systems have applications in many areas such as formal verification and synthesis 
of infinite-state (hybrid) systems \cite{tab09,Belta2017FormalMethodsDTDS,Baier2008ModelChecking},
systems biology \cite{Akutsu2018BNbook}, etc.

As special deterministic finite-state systems in which all nodes can be only in one of two states,
Boolean control networks (BCNs) were proposed to describe genetic regulatory networks
\cite{Kauffman1969RandomBN,Ideker2001}. 
In a BCN, nodes
can be in one of two discrete states ``$1$'' and ``$0$'', which represent
a gene state ``on'' (high concentration of a protein) and 
``off'' (low concentration), respectively.
Every node updates its state according to a Boolean function of the states of several of the network nodes. 
Although BCNs are a simplified model of genetic regulatory networks, 
they can be used to characterize many
important phenomena of biological systems, e.g., cell cycles \cite{Faure2006DynamicalAnalysisofGBMforMCC},
cell apoptosis \cite{Sridharan2012}. Hence the study on BCNs has
been paid wide attention \cite{Kitano2002,Akutsu2000,Albert2000,Zhao2013AggregationAlgorithmBNattractor}.

A logical control network (LCN) is also a deterministic finite-state system but naturally extends a BCN
in the sense that the nodes of the former can be in one of a finite number (but not necessarily $2$) 
of states \cite{Zhao2011OptimalControlLCN}. From a practical point of view, LCNs can be used to describe 
more biological systems than BCNs. However, under the semitensor product (STP) framework, they have the same
algebraic form \cite{Zhao2011OptimalControlLCN}, hence they can be dealt with by using the same method.
In this paper, we study LCNs.

In 2007, Akutsu et al. \cite{Akutsu2007} proved that it is $\NP$-hard to verify whether a BCN is
controllable in the number of nodes, hence
there exists no polynomial-time algorithm for determining controllability of BCNs
unless  $\PTIME=\NP$. They also
pointed out  that ``One of the major goals of
systems biology is to develop a control theory for complex biological systems''.
Later in 2013, Laschov et al. \cite{Laschov2013ObservabilityofBN:GraphApproach} proved that
it is also $\NP$-hard to verify whether a BCN is observable in the number of nodes.
These $\NP$-hardness results show that it is generally intractable to verify controllability
and observability for
large-scale BCNs (those with more than approximately $30$ nodes, i.e., more than $2^{30}$ states),
and also stimulate explorations on how to design efficient verification algorithms that work on large-scale 
BCNs with special structures, e.g., in \cite{Zhao2016ControllabilityAggregationBCN,Zhang2020ObsLargeBCN}.

Recently, a control-theoretic framework for BCNs based on the STP of matrices
(proposed by Cheng \cite{Cheng2001STP} in 2001) was established by
Cheng and Qi \cite{Cheng2009bn_ControlObserva} in 2009. Although the STP method
cannot make the generally intractable problems related to BCNs become tractable, it provides 
a matrix method for characterizing BCNs, so that many matrix-based techniques in control theory 
can be used to study BCNs. As a result, it stimulates the studies on control problems of
BCNs based on diverse methods, e.g., controllability
\cite{Cheng2009bn_ControlObserva,Zhao2010InputStateIncidenceMatrix},
observability \cite{Cheng2009bn_ControlObserva,Fornasini2013ObservabilityReconstructibilityofBCN,Zhang2016ObservabilityofBCN,Li2015ControlObservaBCN},
reconstructibility \cite{Fornasini2013ObservabilityReconstructibilityofBCN,Zhang2016WPGRepresentationReconBCN},
identifiability \cite{Cheng2011IdentificationBCN,Zhang2017IdentificationBCN},
invertibility \cite{Zhang2015Invertibility_BCN},
Kalman decomposition \cite{Zou2015KalmanDecompositionBCN}, disturbance decoupling 
\cite{Li2021EquivalentConditionDDP_BCN}, and other related results
\cite{Li2016PinningControlSynchronizationBCN,Wu2018FiniteConvergenceStoLDS,Guo2018SetStabilityPBN,Li2018BisimulationBCN,Li2013OututFeedbackBCN}. 
Among these results, some was worked out based on a computational-algebra method
\cite{Li2015ControlObservaBCN},
some based on finite automata and graph theory \cite{Zhang2016ObservabilityofBCN,Zhang2016WPGRepresentationReconBCN},
some based on the STP and graph theory \cite{Fornasini2013ObservabilityReconstructibilityofBCN},
and some based on symbolic dynamics \cite{Zhang2015Invertibility_BCN,Hochma2013SymbolicdynamicsofBCN}.

\subsection{Literature review}
\label{subsec:LiterRev}

Among many control properties, \emph{controllability} and \emph{observability} are the most
fundamental ones. The former implies that an arbitrary given state of a system can be steered to an 
arbitrary given state by some input sequence. 
The latter implies that the initial state can be determined by a sufficiently long input sequence 
and the corresponding output sequence. 
The importance of controllability and observability of BCNs can be found in \cite{Akutsu2007} and
\cite{Sridharan2012}, etc. 
Lack of these properties makes a system lose many good behaviors. 
So, it is important to investigate 
how to enforce controllability and observability, e.g., by means of feedback and controller synthesis.
Matrix forms of necessary and sufficient conditions for controllability of BCNs were given in 
\cite{Cheng2009bn_ControlObserva,Zhao2010InputStateIncidenceMatrix}. Necessary and sufficient  conditions
for observability of BCNs are much more difficult to obtain, and furthermore, there exist nonequivalent
definitions of observability in BCNs, e.g., it was proven in \cite{Zhang2016ObservabilityofBCN} that 
four definitions of observability are pairwise nonequivalent, showing that observability is not dual 
to controllability, remarkably differentiating BCNs from linear control systems. A summary on 
necessary and sufficient conditions of different definitions of observability of BCNs is given in 
Table~\ref{tab1:ObsResulBCN}.
In \cite{Zhang2014ObservabilityofBCNCCC,Zhang2016ObservabilityofBCN}, a notion of \emph{weighted
pair graph} was proposed (later it was renamed \emph{observability graph} in
\cite{Zhang2020ObsLargeBCN,Zhang2020bookDDS}), and verifiable necessary and sufficient conditions for four definitions
of observability (shown in Definitions~\ref{def1:observability_aggregation}, \ref{def4_observability},
\ref{def1_observability}, \ref{def7_observability})
were obtained by computing different types of deterministic finite automata from an observability graph
adapted to the four definitions of observability. Later on, the observability graph was used in many papers
to solve related problems, e.g.,
\cite{Cheng2016NoteonObservabilityBCN} (the observability graph was called \emph{observability matrix} therein,
the set of diagonal vertices
and the set of non-diagonal vertices in an observability graph (see Definition~\ref{def2:observability_aggregation})
are exactly the set $D$ and the set $\Xi$ in \cite[Page 78]{Cheng2016NoteonObservabilityBCN}),
\cite{Zhu2018ObservabilityBCN,Zhang2020ObsLargeBCN} (called \emph{observability graph} therein),
\cite{Cheng2018ObservabilityBCNSetControl,Wang2021PerburbationAnaObservabilityBCN}\footnote{
\cite[Theorem~1]{Wang2021PerburbationAnaObservabilityBCN} is a special case of 
\cite[Theorem~3.15]{Zhang2016ObservabilityofBCN} (i.e., Lemma~\ref{lem1:observability_aggregation}).},
\cite{Guo2018ObservabilityBCN,Zhou2019ObservabilityPBN} (called \emph{parallel extension} therein),
\cite{Yu2021ObservabilityBooleanNetwork}. In addition, the non-diagonal subgraph of an observability graph
(see Definition~\ref{def2:observability_aggregation}) was proposed in \cite{Zhang2016WPGRepresentationReconBCN}
(called weighted pair graph therein, later renamed \emph{reconstructibility graph} in 
\cite{Zhang2020ObsLargeBCN} and \emph{detectability graph} in \cite{Zhang2020bookDDS}) to 
verify two definitions of reconstructibility of BCNs (the stronger one was earlier studied in
\cite{Fornasini2013ObservabilityReconstructibilityofBCN}).
Recently, a variant of the reconstructibility graph was used to 
study reconstructibility of singular Boolean control networks \cite{Li2020ReconstructibilitySBCN},
where such networks are a subclass of nondeterministic finite-transition systems \cite{Zhang2018ObservabilityFLTS}
by definition.

Observability results have also been extended to nondeterministic finite-transition systems (NFTS's)
\cite{Zhang2018ObservabilityFLTS} and probabilistic Boolean networks (PBNs)
\cite{Zhao2015ObservabilityPBN,Zhou2019ObservabilityPBN,Fornasini2020ObserReconPBN,Yu2021ObservabilityBooleanNetwork},
where the stochastic switching signals in PBNs are independent and identically distributed 
processes, hence the PBNs are actually discrete-time finite-state time-homogeneous 
Markov chains. Moreover, if all probabilities in such a PBN are removed then it
becomes an NFTS in which the input is constant. That is, the systems considered in 
\cite{Zhang2018ObservabilityFLTS} are more general than the systems considered in 
\cite{Zhao2015ObservabilityPBN,Zhou2019ObservabilityPBN,Fornasini2020ObserReconPBN,Yu2021ObservabilityBooleanNetwork}.
In \cite{Zhang2018ObservabilityFLTS}, the notion of observability graph was extended from BCNs
to NFTS's, where the matrix $\mathsf{O}$ in Eqn.~(5) of 
\cite{Zhang2018ObservabilityFLTS} is the adjacency matrix of the (extended) observability graph 
of an NFTS.
By additionally computing different types of deterministic finite automata from an observability graph,
verifiable necessary and sufficient conditions were given for
Definitions~\ref{def1:observability_aggregation}, \ref{def4_observability}, \ref{def7_observability} extended
to NFTS's. In \cite{Zhao2015ObservabilityPBN,Zhou2019ObservabilityPBN,Fornasini2020ObserReconPBN,Yu2021ObservabilityBooleanNetwork},
three definitions of observability for PBNs were studied: observability in probability on $\llb 0,\theta \rrb$ with
$\theta\in\N$ (see notation in Section~\ref{subsec:STP}), finite-time observability in probability,
and asymptotic observability in distribution. The main results in
\cite{Zhou2019ObservabilityPBN,Yu2021ObservabilityBooleanNetwork} were obtained
by using the observability graph (called \emph{parallel extension} in \cite{Zhou2019ObservabilityPBN}).
Later, by definition we will show that the first two of the above three definitions of observability in PBNs
are actually a slightly stronger version of Definition~\ref{def4_observability} and
Definition~\ref{def4_observability} itself, in BCNs, respectively. We will also point out that the
results in \cite{Yu2021ObservabilityBooleanNetwork} already show that 
the third one is also a slightly stronger version of Definition~\ref{def4_observability} of BCNs.
That is, the probabilities in the PBNs studied in
\cite{Zhao2015ObservabilityPBN,Zhou2019ObservabilityPBN,Fornasini2020ObserReconPBN,Yu2021ObservabilityBooleanNetwork}
play no role in adding stochasticity when studying observability. See Remark~\ref{rem6:synthesisConObser},
in which the \emph{deterministic essence} of observability of PBNs studied in 
\cite{Zhao2015ObservabilityPBN,Zhou2019ObservabilityPBN,Fornasini2020ObserReconPBN,Yu2021ObservabilityBooleanNetwork}
is revealed.
	\begin{table*}[!htbp]
				\centering
		{\scriptsize
		\begin{tabular}{|c||c|c|c|c|}
			\hline\rowcolor{lightgray}
			& Def.~\ref{def1_observability} & Def.~\ref{def4_observability} & Def.~\ref{def7_observability} & Def.~\ref{def1:observability_aggregation}\\\hline
			Fornasini and Valcher \cite{Fornasini2013ObservabilityReconstructibilityofBCN} & & & & {\brown $O(2^{4n+m})$} \\\hline
			Li et al. \cite{Li2014ObservabilityConditionsofBCN} & & & {\green $O(2^{2^{2n}+m})$} &\\\hline

			Zhang and Zhang \cite{Zhang2014ObservabilityofBCNCCC} & \multirow{4}{*}{{\blue $O(2^{n+2^{2n}+m})$}} & 
			\multirow{4}{*}{{\blue $O(2^{4n+m})$}} & \multirow{4}{*}{{\blue$O(2^{2^{2n}+m})$}}
			& \multirow{4}{*}{{\blue $O(2^{2n+m})$}}\\
			Zhang and Zhang \cite{Zhang2016ObservabilityofBCN} & & & & \\
			{\blue weighted pair graph} 
			{\blue (WPG, $O(2^{2n+m})$)} & & & &\\
			{\blue (renamed observability graph in \cite{Zhang2020ObsLargeBCN,Zhang2020bookDDS})} & & & & \\\hline

			Li et al. \cite{Li2015ControlObservaBCN} & \multirow{3}{*}{} & \multirow{3}{*}{\red $O(2^{2^{2n}+m})$} & \multirow{3}{*}{} & \multirow{3}{*}{} \\
			{\red computational algebra} & & & & \\
			{\red (very fast in sparse BCNs)} & & & & \\\hline

			Cheng et al. \cite{Cheng2016NoteonObservabilityBCN} & & \multirow{3}{*}{\blue $O(2^{2n+m})$} & & \\
			{\blue observability matrix} & & & &\\
			{\blue (i.e., adjacency matrix of WPG)} & & & &\\\hline

			Zhu et al. \cite{Zhu2018ObservabilityBCN} & & \multirow{2}{*}{\blue $O(2^{2n+m})$} & & \\
			{\blue observability graph (i.e., WPG)} & & & & \\\hline

			Cheng et al. \cite{Cheng2018ObservabilityBCNSetControl} & & \multirow{2}{*}{\blue $O(2^{6n+m})$} & & \\
			{\blue set controllability}  & & & & \\\hline

			Guo \cite{Guo2018ObservabilityBCN} & & \multirow{3}{*}{\blue $O(2^{6n+m})$} & \multirow{3}{*}{\blue $O(2^{n2^{2n+1}+m})$} & \multirow{3}{*}{\blue $O(2^{n2^{2n+1}+m})$} \\
			{\blue parallel extension} & & & & \\
			{\blue (i.e., adjacency matrix of WPG)}  & & & & \\\hline
		\end{tabular}
		}
		\caption{Complexity upper bounds for verifying four definitions of observability in BCNs,
			the same color represents equivalent methods, $n$ and $m$ denote the numbers of state nodes and
			input nodes. The observability matrix in \cite{Cheng2016NoteonObservabilityBCN}
			is exactly the adjacency matrix of the weighted pair graph proposed in 
			\cite{Zhang2014ObservabilityofBCNCCC}, the observability graph in \cite{Zhu2018ObservabilityBCN}
			is exactly the weighted pair graph, the parallel extension in \cite{Guo2018ObservabilityBCN}
			is actually the adjacency matrix of the weighted pair graph when being used to verify 
			observability, the parallel extension was later used to verify observability
			of probabilistic Boolean networks in \cite{Zhou2019ObservabilityPBN},
			the set-controllability method in \cite{Cheng2018ObservabilityBCNSetControl} is equivalent to
			the parallel-extension method in \cite{Guo2018ObservabilityBCN} when being applied to 
			verify observability. The method in 
			\cite{Li2015ControlObservaBCN} is function-based, all the other methods are state-based. 
			So, the method in  \cite{Li2015ControlObservaBCN} shows remarkably different efficiencies 
			when being applied to BCNs with different structures, but the other methods show similar efficiencies. 
			In \cite{Cheng2016NoteonObservabilityBCN,Zhu2018ObservabilityBCN}, the complexity is lower
			than that in \cite{Zhang2014ObservabilityofBCNCCC}, because in 
			\cite{Cheng2016NoteonObservabilityBCN,Zhu2018ObservabilityBCN} the authors jumped over 
			the procedure of computing at most $2^{2n}$ deterministic finite automata (each with complexity
			$O(2^{2n+m})$) 
			from a weighted pair graph adopted in \cite{Zhang2014ObservabilityofBCNCCC}, and directly used
			the observability matrix and the observability graph to verify observability. This idea was earlier
			used in \cite{Zhang2016WPGRepresentationReconBCN} when verifying reconstructibility of BCNs.}
		\label{tab1:ObsResulBCN}
	\end{table*}

\subsection{Potential applications}

The verification problem for controllability or observability of 
(infinite-state) hybrid systems is generally undecidable. If one can construct an LCN
as a finite abstraction that (bi)simulates a given hybrid system in the sense of preserving controllability
or observability, then one can verify controllability or observability for the hybrid system by verifying
the LCN. An attempt of using a similar scheme to verify opacity of (infinite-state) 
transition systems can be found in \cite{Zhang2019OpacitySimulationTS}. Related results on using 
finite abstractions to do verification or synthesis for infinite-state systems
can be found in \cite{zam14,Coogan2016FormalMethodsTrasfficFlow}, etc.

As for the synthesis problem, it is known that for linear control systems,
state feedback with exogenous input does not affect controllability,
but may affect observability \cite{Wonham1985LinearMultiControl}.
However, both properties may be affected by state feedback with exogenous input
for nonlinear control systems and hybrid systems. 
By using a simulation-based method, if one can construct an LCN as a finite abstraction
that (bi)simulates a given unobservable hybrid system in the sense of preserving 
observability, then one can first try to find a state-feedback controller to make the obtained unobservable LCN
observable, and then refine the obtained controller into the original hybrid system so as to make the 
original hybrid system observable. Here we do not mention controllability because 
state feedback with exogenous input cannot enforce controllability for LCNs but sometimes can
weaken their controllability \cite{Zhang2019CDCSynthesisBCN}: 
If an LCN is not controllable, then no state-feedback controller with exogenous input can make it controllable;
if an LCN is controllable, then there may exist a state-feedback controller with exogenous input that 
makes the LCN uncontrollable.

\subsection{Contribution}

The main contributions of this paper are as follows: Let $\Sig$ be an LCN and $\Sig_{\Ccal}$ an LCN
obtained by feeding a state-feedback controller $\Ccal$ with exogenous input into $\Sig$.

\begin{enumerate}
	\item\label{item4:synthesisConObser}
		We prove that state feedback with exogenous input sometimes can enforce observability of LCNs. 
		If $\Sig$ is observable, then $\Sig_{\Ccal}$ can be either observable or not;
		if $\Sig$ is unobservable, then $\Sig_{\Ccal}$ can also be either observable or not.
	\item\label{item5:synthesisConObser}
		We prove that if an unobservable $\Sig$ can be made observable by state feedback
		with exogenous input,
		then it can also be made observable by state feedback (without exogenous input,
		equivalent to state feedback with constant exogenous input). This result yields an
		algorithm for verifying whether an unobservable LCN can be made observable by 
		state feedback with exogenous input,
		since there are finitely many state-feedback controllers
		(although there are infinitely many state-feedback controllers with exogenous input).
	\item\label{item6:synthesisConObser}
		We also obtain an upper bound on the number of state-feedback controllers that are needed 
		to be substituted into an unobservable LCN $\Sig$ to check whether $\Sig$ can be made observable by 
		state feedback, and based on the procedure of obtaining the upper bound,
		we design an observability synthesis algorithm, by additionally combining the ideas of a greedy
		algorithm and dynamic programming, opening the study of observability synthesis in LCNs.
\end{enumerate}

The above 
\eqref{item4:synthesisConObser} had been presented at the
58th IEEE Conference on Decision and
Control 2019 \cite{Zhang2019CDCSynthesisBCN} and are also illustrated in Table~\ref{tab1:mainresult}.
The other contributions show substantially new results 
compared with 
\eqref{item4:synthesisConObser}.


\begin{table*}[!htbp]
	\centering
	\begin{tabular}[]{|c||c|c|c|c|}
		\hline\rowcolor{lightgray}
		& can enforce & can weaken & can enforce & can weaken\\
		\rowcolor{lightgray}
		& controllability? & controllability? & observability? & observability?\\\hline
		\begin{tabular}[]{c}
			state feedback\\ with exogenous input
		\end{tabular}
		&
		\begin{tabular}[]{c}
			No\\ (\cite{Zhang2019CDCSynthesisBCN})
		\end{tabular}
		& 
		\begin{tabular}[]{c}
			Yes\\ (\cite{Zhang2019CDCSynthesisBCN})
		\end{tabular}
		& 
		\begin{tabular}[]{c}
			Yes\\ (Exam.~\ref{exam4:synthesisConObser})
		\end{tabular}
		&
		\begin{tabular}[]{c}
			Yes\\ (Exam.~\ref{exam7:synthesisConObser})
		\end{tabular}
		\\\hline
	\end{tabular}
	\caption{Influence of state feedback with exogenous input to 
	controllability and observability of LCNs.}
	\label{tab1:mainresult}
\end{table*}

The remainder of this paper is organized as follows. Section~\ref{sec2:pre} introduces preliminaries,
i.e., LCNs with their algebraic form under the STP framework, basic verification methods
for observability of LCNs. The main results are shown in 
Section~\ref{sec4:mainresultObservability}.
Section~\ref{sec5:conc} is a short conclusion.

\section{Preliminary results}\label{sec2:pre}

\subsection{The semitensor product of matrices}\label{subsec:STP}

We introduce necessary notation as follows.

\begin{itemize}
	\item $\subset$ and $\subsetneq$ denote subset and strict subset relations, respectively
	\item $2^X$: power set of  set $X$
  \item $\Z_+$: set of  positive integers
  \item $\N$: set of  natural numbers (including $0$)
  \item $\R^n$: set of $n$-length real column vectors
  \item $\R^{m\times n}$: set of $m\times n$ real matrices
  \item $\D_k$: set $\{0,\frac{1}{k-1},...,1\}$ of $k$-value logic
  \item $\delta_n^i$: $i$-th column of the identity matrix $I_n$
    \item ${\bf 1}_k$: $\sum_{i=1}^k{\delta_k^i}$
  \item $\Delta_n$: set $\{\delta_n^1,\dots,\delta_n^n\}$
	  ($\Dt:=\Dt_2$)
 \item $\llb m,n\rrb$: $\{m,m+1,...,n\}$, where $m,n\in\N$ and $m\le n$
  \item $\delta_n[i_1,\dots,i_s]$:  logical matrix
	  $[\delta_n^{i_1},\dots,\delta_n^{i_s}]$, where $i_1,\dots,i_s\in\llb 1,n\rrb$
  \item $\LM_{n\times s}$:  set of
  $n\times s$ logical matrices 
  \item $\col_i(A)$: $i$-th column of matrix $A$
  \item $\col(A)$: set of columns of matrix $A$
  \item $A^T$: transpose of matrix $A$
  \item $|X|$:  cardinality of set $X$
   \item $A_1\oplus A_2\oplus\cdots\oplus A_n$: $\begin{bmatrix}
A_1 & 0 & \cdots & 0\\
0   & A_2 & \cdots & 0\\
\vdots & \vdots & \ddots & \vdots\\
0   & 0   & \cdots  & A_n
\end{bmatrix}$
\end{itemize}

\begin{definition}[\cite{Cheng2011book}]
	Let $A\in \R^{m\times n}$, $B\in \R^{p\times q}$, and $\alpha=\mbox{lcm}
	(n,p)$ be the least common multiple of $n$ and $p$. The STP of $A$
	and $B$ is defined as
	\begin{equation*}
		A\ltimes B = \left(A\otimes I_{\frac{\alpha}{n}}\right)\left(B\otimes I_{\frac
		{\alpha}{p}}\right),
		\label{def_of_stp}
	\end{equation*}
	where $\otimes$ denotes the Kronecker product.
\end{definition}

From this definition, it is easy to see that the conventional
product of matrices is a particular case of the STP, since if $n=p$ then $A\ltimes B=AB$.
Since the STP keeps most properties of the conventional product,
e.g., the associative law \cite{Zhang2020bookDDS}\footnote{In \cite{Cheng2011book}, the associative law
$(A\ltimes B)\ltimes C=A\ltimes(B\ltimes C)$ was proven in the special case that $n$ divides $p$ (or vice versa)
and $q$ divides $r$ (or vice versa), where $n$ and $q$ are the numbers of columns of $A$ and $B$, respectively,
$p$ and $r$ are the numbers of rows of $B$ and $C$, respectively.},
the distributive law, etc. \cite{Cheng2011book},
we usually omit the symbol ``$\ltimes$'' hereinafter.

\subsection{Logical control networks and their algebraic form}

In this paper, we investigate the following LCN
with $n$ state nodes, $m$ input nodes, and $q$ output nodes:
\begin{equation}\label{LCN1:synthesisConObser}
\begin{split}
 x_1 (t + 1) &= f_1 (x_1 (t), \dots ,x_n (t),u_1 (t), \dots ,u_m (t)), \\
 x_2 (t + 1) &= f_2 (x_1 (t), \dots ,x_n (t),u_1 (t), \dots ,u_m (t)), \\
  &\quad\!\!\vdots  \\
 x_n (t + 1) &= f_n (x_1 (t), \dots ,x_n (t),u_1 (t), \dots ,u_m (t)),\\
 y_1 (t ) &= h_1 (x_1 (t), \dots ,x_n (t)), \\
 y_2 (t ) &= h_2 (x_1 (t), \dots ,x_n (t)), \\
  &\quad\!\!\vdots  \\
 y_q (t ) &= h_q (x_1 (t), \dots ,x_n (t)),
 \end{split}
\end{equation}
where $t\in\N$ denote discrete time steps; $x_i(t)\in\D_{n_i}$, $u_j(t)\in\D_{m_j}$, 
and $y_k(t)\in\D_{q_k}$ denote 
values of state node $x_i$, input node $u_j$, and output node $y_k$ at time step $t$,
respectively,
$i\in\llb 1,n\rrb$, $j\in\llb 1,m\rrb$, $k\in\llb 1,q\rrb$; $\prod_{i=1}^n{n_i}=:N$; $\prod_{j=1}^m{m_i}=:M$;
$\prod_{k=1}^q{q_i}=:Q$;
$f_i:\D_{MN}\to\D_{n_i}$ and $h_k:\D_{N}\to \D_{q_k}$
are logical mappings, $i\in\llb 1,n\rrb$, $k\in\llb 1,q\rrb$.

When $n_1=\cdots=n_n=m_1=\cdots=m_m=q_1=\cdots =q_q=2$, Eqn.~\eqref{LCN1:synthesisConObser} is a BCN.

Eqn.~\eqref{LCN1:synthesisConObser} can be represented in the compact form
\begin{equation}\label{LCN2:synthesisConObser}
	\begin{split}
		x(t+1) &= f(x(t),u(t)),\\
		y(t) &= h(x(t)),
	\end{split}
\end{equation}
where $t\in\N$; $x(t)\in\D_{N}$, $u(t)\in\D_{M}$, and $y(t)\in\D_{Q}$
stand for the state, input, and output of the LCN at time step $t$;
$f:\D_{NM}\to \D_{N}$ and $h:\D_{N}\to \D_{Q}$ are mappings.

For each $n\in\Z_{+}$ greater than $1$, we map $\frac{n-i}{n-1}$ in $\D_n$ to $\delta_n^i$ 
in $\Dt_n$ for all $i\in\llb 1,n\rrb$, and write $\frac{n-i}{n-1}\sim\delta_n^i$. 
Then under the STP framework, Eqn.~\eqref{LCN2:synthesisConObser} can be transformed to its equivalent
algebraic form as follows \cite{Cheng2011book}:
\begin{equation}\label{LCN3:synthesisConObser}
	\begin{split}
		\tilde x (t + 1) &= L \tilde x (t) \tilde u(t)=[L_1,\dots,L_N] \tilde x (t) \tilde u(t),\\
		\tilde y (t ) &= H \tilde x (t), 
	\end{split}
\end{equation}
where $t\in\N$; $\tilde x(t)\in\Dt_{N}$, $\tilde u(t)\in\Dt_M$, $\tilde y(t)\in\Dt_Q$;
$L\in\LM_{N\times NM}$ and $H\in\LM_{Q\times N}$ are called the \emph{structure matrices},
$L_i\in\LM_{N\times M}$, $i\in\llb 1,N\rrb$.

\subsection{Preliminary results for observability}

In \cite{Zhang2014ObservabilityofBCNCCC,Zhang2016ObservabilityofBCN}, four types of observability were
verified for BCNs by proposing a unified automaton method (computing four types of deterministic
finite automata from the observability graph (proposed in 
\cite{Zhang2014ObservabilityofBCNCCC,Zhang2016ObservabilityofBCN} and
called weighted pair graph therein, and renamed observability graph in \cite{Zhang2020ObsLargeBCN,Zhang2020bookDDS}) 
of a BCN to verify the corresponding four types of 
observability). In this paper, we are particularly interested in the linear type
(as in Definition~\ref{def1:observability_aggregation}, i.e., the strongest one among the four types,
earlier studied in \cite{Fornasini2013ObservabilityReconstructibilityofBCN}),
as if an LCN satisfies this observability property, it is very easy to recover the initial state by using
an input sequence and the corresponding output sequence. Note that all results in \cite{Zhang2014ObservabilityofBCNCCC,Zhang2016ObservabilityofBCN}
can be trivially extended to LCNs. The necessary and sufficient condition for 
Definition~\ref{def1:observability_aggregation} given in \cite{Fornasini2013ObservabilityReconstructibilityofBCN}
is as follows: a BCN satisfies Definition~\ref{def1:observability_aggregation} if and only if for every 
pair of different periodic (state, input)-trajectories of the same minimal period $k$ and the same input trajectory,
the corresponding output trajectories are also different and periodic of minimal period $k$.  Obviously, 
the necessary and sufficient condition given in \cite{Fornasini2013ObservabilityReconstructibilityofBCN} is much 
more complex than the one given in the subsequent Lemma~\ref{lem1:observability_aggregation} \cite{Zhang2016ObservabilityofBCN}.

The four types of observability studied in \cite{Zhang2014ObservabilityofBCNCCC,Zhang2016ObservabilityofBCN} are 
as follows (also see Table~\ref{tab1:ObsResulBCN}). We adopt the terminology used in \cite{Zhang2020bookDDS}.

\begin{definition}\label{def1:observability_aggregation}
	An LCN~\eqref{LCN2:synthesisConObser} is called \emph{arbitrary-experiment observable} if
	for all different initial states $x(0),x'(0)\in\D_{N}$, for each input sequence
	$u(0)u(1)\dots$, the corresponding output sequences 
	$y(0)y(1)\dots$ and $y'(0)y'(1)\dots$ are different. 
\end{definition}

\begin{definition}\label{def4_observability}
	An LCN~\eqref{LCN2:synthesisConObser} is called \emph{multiple-experiment observable}
	if for very two different initial states
	$x(0),x(0)'\in\D_N$, there is an input sequence such that the output sequences corresponding to
	$x(0)$ and $x(0)'$ are different. Such an input sequence is called a \emph{distinguishing input
	sequence} of $x(0)$ and $x(0)'$.
\end{definition}

\begin{definition}\label{def1_observability}
	An LCN~\eqref{LCN2:synthesisConObser} is called \emph{strongly multiple-experiment observable}
	if for every initial state $x(0)\in\D_N$, there exists an input sequence such that
	for each initial state $x(0)'\in\D_N$ different from $x(0)$, the output sequences corresponding to
	$x(0)$ and $x(0)'$ are different.
\end{definition}

\begin{definition}\label{def7_observability}
	An LCN~\eqref{LCN2:synthesisConObser} is called \emph{single-experiment observable}
	if there exists an input sequence such that for every two different initial states $x(0),x(0)'\in\D_N$,
	the output sequences corresponding to $x(0)$ and $x(0)'$ are different.
\end{definition}

From now on when we mention ``observability'', we always mean Definition~\ref{def1:observability_aggregation}
unless otherwise stated.

Now we introduce the notion of observability graph.

\begin{definition}\cite{Zhang2014ObservabilityofBCNCCC,Zhang2016ObservabilityofBCN}\label{def2:observability_aggregation}
	Consider an LCN~\eqref{LCN2:synthesisConObser}.
	A triple $\Gr_o=(\V,\E,\W)$ is called
	its \emph{observability graph} if $\V$ (elements of $\V$ are called vertices) is equal to
	$ \{\{x,x'\}\in\D_N \times\D_N|h(x)=h(x')\}$\footnote{vertices are unordered state 
	pairs, i.e., $\{x,x'\}=\{x',x\}$.}, 
	$\E$ (elements of $\E$ are called edges) is equal to
	$\{(\{x_1,x_1'\},\{x_2,x_2'\})\in\V\times\V|
	\text{there exists }u\in\D_M\text{ such that }f(x_1,u)=x_2\text{ and }f(x_1',u)=x_2',
	\text{ or, }f(x_1,u)=x_2'\text{ and }f(x_1',u)=x_2\}\subset \V\times\V$, and
	the weight function $\W:\E\to 2^{\D_M}$ assigns to each edge $(\{x_1,x_1'\},\{x_2,x_2'\})\in\E$ a set $
	\{u\in\D_M|f(x_1,u)=x_2\text{ and }f(x_1',u)=x_2',
	\text{ or, }f(x_1,u)=x_2'\text{ and }f(x_1',u)=x_2	\}$ of inputs. 
	A vertex $\{x,x'\}$ is called \emph{diagonal} if $x=x'$, and called \emph{non-diagonal} otherwise.
	For a vertex $v\in\V$, its \emph{outdegree} is $\outdeg(v):=|\bigcup_{(v,v')\in\E}\W((v,v'))|$, i.e.,
	the number of inputs appearing in the edges starting from $v$. 
	The \emph{diagonal subgraph} of an observability graph is defined by
	all diagonal vertices and all edges between them.
	Similarly, the \emph{non-diagonal subgraph} is defined by
	all non-diagonal vertices and all edges between them.
\end{definition}

\begin{lemma}[\cite{Zhang2016ObservabilityofBCN}]\label{lem1:observability_aggregation}
	An LCN~\eqref{LCN2:synthesisConObser} is not observable if and only if
	in its observability graph $\Gr_o$ there is a non-diagonal vertex $v$, a cycle $C$, and
	a path from $v$ to a vertex of $C$. Particularly if in $\Gr_o$ there exists a path from $v$ to a
	diagonal vertex, then \eqref{LCN2:synthesisConObser} is not observable.
\end{lemma}

Since in a diagonal subgraph, there must exist a cycle and each vertex will go to a cycle,
we will denote the subgraph briefly by a symbol 
$\diamond$ when drawing an observability graph.
Hence if there exists an edge from a non-diagonal vertex to a diagonal vertex,
then the LCN is not observable.

\begin{example}\label{exam2:synthesisConObser}
	Consider the BCN
	\begin{equation}\label{eqn9:synthesisConObser}
		\tilde x(t+1)=L\tilde x(t)\tilde u(t),
	\end{equation}
	where $L=\delta_4[2,  2,  1,  3,  4,  4,  2,  2]$,
	$t\in\N$, $\tilde x(t)\in\Dt_4$, $\tilde u(t)\in\Dt$.
	Consider the output function
	\begin{equation}\label{eqn11:synthesisConObser}
		\tilde y(t)=\delta_2[1,1,1,2]\tilde x(t),
	\end{equation}
	where $t\in\N$, $\tilde x(t)\in\Dt_4$, $\tilde y(t)\in\Dt$.
	The observability graph of BCN~\eqref{eqn9:synthesisConObser} with output 
	function~\eqref{eqn11:synthesisConObser} is shown in Fig.~\ref{fig1:synthesisConObser}.
	This graph shows that the BCN is not observable by Lemma~\ref{lem1:observability_aggregation}
	since there is a self-loop on non-diagonal vertex $\{\dt_4^1,\dt_4^2\}$.

	\begin{figure}[!htbp]
        \centering
\begin{tikzpicture}[shorten >=1pt,auto,node distance=1.5 cm, scale = 1.0, transform shape,
	>=stealth,inner sep=2pt,state/.style={
	rectangle,minimum size=6mm,rounded corners=3mm,
	very thick,draw=black!50,
	top color=white,bottom color=black!50,font=\ttfamily},
	point/.style={rectangle,inner sep=0pt,minimum size=2pt,fill=}]
	\node[state] (12)                                 {$12$};
	\node[state] (13) [right of = 12]               {$13$};
	\node[state] (23) [below of = 12]               {$23$};
	\node[state] (di) [right of = 23]               {$\diamond$};

	\path [->] (12) edge [loop left] node {$1$} (12)
	      [->] (12) edge node {$2$} (23)
		  [->] (di) edge [loop right] node {$1,2$} (di)
		  ;
        \end{tikzpicture}
		\caption{Observability graph of BCN~\eqref{eqn9:synthesisConObser} with output 
		function~\eqref{eqn11:synthesisConObser},
		where number $ij$ in a circle denotes state pair $\{\delta_4^i,\delta_4^j\}$,
		weight $i$ denotes input $\delta_2^i$.}
		\label{fig1:synthesisConObser}
	\end{figure}
\end{example}

\section{Synthesis results for observability}\label{sec4:mainresultObservability}

In this section, we show synthesis results for observability of 
LCN~\eqref{LCN2:synthesisConObser} (or its algebraic form \eqref{LCN3:synthesisConObser})
based on state feedback with exogenous input.

\subsection{Closed-loop logical control networks by state feedback with exogenous input}

Consider an LCN~\eqref{LCN2:synthesisConObser}. Let a \emph{state-feedback controller
with exogenous input} be
\begin{equation}\label{eqn2:synthesisConObser}
	u(t)=g(x(t),v(t)),
\end{equation}
where $v(t)\in\D_P$ is the exogenous input, $P=\prod_{l=1}^p{p_l}$ with each $p_l\in\N$ greater than $1$
(corresponding to $l$ new input nodes); or $P=1$, which means that
there is only one constant input; $g:\D_{NP}\to\D_M$ is a mapping.
Equivalently in the algebraic form, for an LCN~\eqref{LCN3:synthesisConObser},
we set a state-feedback controller with exogenous input to be 
\begin{equation}\label{eqn3:synthesisConObser}
\tilde u(t)=G\tilde x(t)\tilde v(t)=[G_1,\dots,G_N]\tilde x(t)\tilde v(t),
\end{equation}
where $\tilde v(t)\in\Dt_P$, $G\in\LM_{M\times NP}$ is called the structure matrix,
$G_i\in\LM_{M\times P}$, $i\in\llb 1,N\rrb$.

In particular, when $P=1$,
a state-feedback controller with exogenous input is called a \emph{state-feedback controller}.
When $P=M$ and $g(x(t),v(t))\equiv v(t)$, controller~\eqref{eqn3:synthesisConObser} 
will not change the algebraic 
form of the original LCN,
and hence will not change controllability or observability of the LCN.

Suppose we are free to modify LCN~\eqref{LCN2:synthesisConObser} by setting 
controller~\eqref{eqn2:synthesisConObser}.
Substituting \eqref{eqn2:synthesisConObser} into \eqref{LCN2:synthesisConObser}, we obtain
a closed-loop LCN (see Fig.~\ref{fig8:synthesisConObser} for a sketch) as
\begin{equation}\label{eqn4:synthesisConObser}
	\begin{split}
		x(t+1) &= f(x(t),g(x(t),v(t))),\\
		y(t) &= h(x(t)).
	\end{split}
\end{equation}
\begin{figure}[!htbp]
	\begin{center}
		\begin{tikzpicture}[auto,>=latex',
	block/.style={
  draw, 
  draw=black!50,top color=white,bottom color=black!50,
  rectangle, 
  minimum height=2em, 
  minimum width=4em
  },
sum/.style={
  draw, 
  draw=black!50,top color=white,bottom color=black!50,
  circle, 
  },
input/.style={coordinate},
output/.style={coordinate},
pinstyle/.style={
  pin edge={to-,thin,black}
  }]
    \node [input, name=input] {};
    \node [sum, right = of input] (sum) {};
    \node [block, right = of sum] (controller) {Controller};
    \node [block, right = of controller, 
            node distance=3cm] (system) {System};
    \draw [->] (controller) -- node[name=u] {$u$} (system);
    \node [output, right =of system] (output) {};

    \draw [draw,->] (input) -- node {$v$} (sum);
    \draw [->] (sum) -- node {} (controller);
    \draw [->] (system) -- node [name=y] {$x$}(output);
    \draw [->] (y) -- ++(0,-1.5cm) -| 
        node [near end] {} (sum);
\end{tikzpicture}
	\end{center}
	\caption{Closed-loop logical control network based on state feedback with exogenous input.}
	\label{fig8:synthesisConObser}
\end{figure} 

Equivalently, substituting \eqref{eqn3:synthesisConObser} into \eqref{LCN3:synthesisConObser}, we obtain
the algebraic form of the closed-loop LCN~\eqref{eqn4:synthesisConObser} as
\begin{equation}\label{eqn5:synthesisConObser}
\begin{split}
 \tilde x (t + 1) &= L \tilde x (t) G \tilde x(t) \tilde v(t),\\
 \tilde y (t ) &= H \tilde x (t).
 \end{split}
\end{equation}

\begin{proposition}\label{prop1:synthesisConObser}
	Eqn~\eqref{eqn5:synthesisConObser} is equivalent to
	\begin{equation}\label{eqn6:synthesisConObser}
		\begin{split}
			 \tilde x (t + 1) &= [L_1G_1,...,L_NG_N] \tilde x (t) \tilde v(t),\\
			 \tilde y (t ) &= H \tilde x (t).
		 \end{split}
	\end{equation}
\end{proposition}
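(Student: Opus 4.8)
The plan is to prove the equivalence by unwinding the definition of the semitensor product and using the block structure of both $L$ and $G$. The claim is that the iterated product $L\tilde x(t)G\tilde x(t)\tilde v(t)$ collapses to $[L_1G_1,\dots,L_NG_N]\tilde x(t)\tilde v(t)$, and the output equation is identical in both forms, so nothing needs to be shown for the second line.

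First I would fix $\tilde x(t)=\delta_N^i$ for an arbitrary $i\in[1,N]$ and compute both sides on this state. On the left, since $L=[L_1,\dots,L_N]$ with each $L_i\in\LM_{N\times M}$, I expect $L\delta_N^i$ to pick out the $i$-th block, giving $L\delta_N^i=L_i$ (this is the standard fact that $L\ltimes\delta_N^i=\col_i\text{-block selection}$, which follows from the column-indexing convention for logical matrices and the definition of $\ltimes$ with $\alpha=\mathrm{lcm}(NM,N)=NM$). Likewise $G\delta_N^i=G_i$ because $G=[G_1,\dots,G_N]$ with $G_i\in\LM_{M\times P}$. After these two selections the left-hand side becomes $L_i\,\delta_N^i\,G_i\,\tilde v(t)$; but here I must be careful that the $\delta_N^i$ sitting between $L_i$ and $G_i$ is consumed correctly. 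The cleaner route is to treat $\tilde x(t)$ as a single logical vector appearing twice: using the power-reducing (or swap) identities for the STP on $\Dt_N$, namely $\delta_N^i\,M\,\delta_N^i=(\delta_N^i\otimes\text{stuff})\cdots$, one reduces the two copies of $\tilde x(t)$ to a single copy, after which the $i$-th block $L_iG_i$ multiplies $\tilde v(t)$.

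The main obstacle is bookkeeping the dimensions and the placement of the repeated factor $\tilde x(t)$ rather than any conceptual difficulty: I must verify that $L\tilde x(t)$ really equals $L_i$ as a matrix (not merely as a column) when $\tilde x(t)=\delta_N^i$, and that inserting this into $L_i\ltimes(G\tilde x(t)\tilde v(t))$ yields exactly $L_iG_i\tilde v(t)$. Concretely I would show
\begin{equation*}
	L\,\delta_N^i\,G\,\delta_N^i\,\tilde v(t)=L_i\,G_i\,\tilde v(t)=\col_i([L_1G_1,\dots,L_NG_N])\ltimes\tilde v(t)=[L_1G_1,\dots,L_NG_N]\,\delta_N^i\,\tilde v(t),
\end{equation*}
where the first equality uses block selection on both $L$ and $G$ together with the fact that $\delta_N^i\ltimes A=A$ absorbs appropriately once $L\delta_N^i=L_i$ has already reduced the left factor to an $N\times M$ matrix acting on the remaining $\delta_N^i\tilde v(t)$. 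Since each $L_iG_i\in\LM_{N\times P}$, the assembled matrix $[L_1G_1,\dots,L_NG_N]$ lies in $\LM_{N\times NP}$, which is exactly the structure matrix of an LCN driven by input $\tilde v(t)\in\Dt_P$.

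Finally, because $i$ was arbitrary and every state $\tilde x(t)\in\Dt_N$ equals some $\delta_N^i$, the two recursions agree for all states and all inputs $\tilde v(t)$, and they produce identical trajectories $\tilde x(\cdot)$ from any initial condition; the output map $H\tilde x(t)$ is unchanged. Hence \eqref{eqn5:synthesisConObser} and \eqref{eqn6:synthesisConObser} describe the same dynamical system, which is the asserted equivalence.
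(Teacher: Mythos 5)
Your argument is correct, but it proceeds by a genuinely different route from the paper. The paper proves the statement as a symbolic matrix identity: it moves $G$ past $\tilde x(t)$ via $\tilde x(t)\ltimes G=(I_N\otimes G)\ltimes\tilde x(t)$ (Proposition \ref{prop2:STP}), collapses the repeated state with the power-reducing matrix $\tilde x(t)^2=M_{N_r}\tilde x(t)$ (Proposition \ref{prop4:STP}), and then computes $L(I_N\otimes G)M_{N_r}=[L_1G_1,\dots,L_NG_N]$ block by block. You instead evaluate both recursions on an arbitrary basis state $\tilde x(t)=\delta_N^i$ and use only associativity of $\ltimes$ plus the block-selection fact $A\ltimes\delta_N^i=A(\delta_N^i\otimes I_M)=A_i$, giving $L\,\delta_N^i\,G\,\delta_N^i\,\tilde v=(L\ltimes\delta_N^i)\ltimes(G\ltimes\delta_N^i)\ltimes\tilde v=L_iG_i\tilde v$ on both sides. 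This is more elementary (it needs neither $M_{N_r}$ nor the swap identities) and establishes exactly the trajectory equivalence the proposition asserts, while the paper's computation yields the slightly stronger structural identity between the matrices themselves. Two small cautions: the detour you sketch through ``power-reducing (or swap) identities'' is unnecessary once you group the factors by associativity, and the parenthetical identity ``$\delta_N^i\ltimes A=A$'' is false as stated (for $A\in\R^{M\times P}$ one has $\delta_N^i\ltimes A=(\delta_N^i\otimes I_M)A\in\R^{NM\times P}$); neither is actually used in your displayed chain of equalities, so the proof stands, but you should delete or correct that remark.
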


\begin{proof}
	By Lemmas~\ref{prop4:STP} and ~\ref{prop2:STP}, \eqref{eqn5:synthesisConObser} can be rewritten as
	\begin{align*}
		\tilde x(t+1) &= L\tilde x (t) G \tilde x(t) \tilde v(t)\\
		&= L(I_N\otimes G)M_{N_r}\tilde x(t)v(t)\\
		&= L
		\begin{bmatrix}
			G\\&\ddots\\&&G
		\end{bmatrix}
		\left( 
		\begin{bmatrix}
			\delta_N^1\\&\ddots\\&&\delta_N^N
		\end{bmatrix}
		\otimes I_P\right)
		\tilde x(t)\tilde v(t)\\
		&= L
		\begin{bmatrix}
			G\\&\ddots\\&&G
		\end{bmatrix}
		\begin{bmatrix}
			\delta_N^1\otimes I_P\\&\ddots\\&&\delta_N^N\otimes I_P
		\end{bmatrix}
		\tilde x(t)\tilde v(t)\\
		&= L
		\begin{bmatrix}
			G(\delta_N^1\otimes I_P)\\&\ddots\\&&G(\delta_N^N\otimes I_P)
		\end{bmatrix}
		\tilde x(t)\tilde v(t)\\
		&= [L_1,\dots,L_N]\begin{bmatrix}
			G_1\\&\ddots\\&&G_N
		\end{bmatrix}
		\tilde x(t)\tilde v(t)\\
		&= [L_1G_1,...,L_NG_N] \tilde x (t) \tilde v(t).
	\end{align*}
\end{proof}

Consider the newly obtained LCN~\eqref{eqn6:synthesisConObser}, if $P=1$, 
then the
corresponding structure matrix $[L_1G_1,\dots,L_NG_N]$ is square.
However, generally the structure matrix is not necessarily
square, hence the updating of states generally depends on the exogenous input $\tilde v(t)$.

\subsection{How state feedback influences observability of LCNs}

Unlike controllability, we next give an example to show that state feedback with exogenous input
can enforce observability
of an LCN.
\begin{example}\label{exam4:synthesisConObser}
	We have proved that BCN~\eqref{eqn9:synthesisConObser} with the output function~\eqref{eqn11:synthesisConObser}
	is not observable in Example~\ref{exam2:synthesisConObser}.
	
	Substituting the controller
	\begin{equation}\label{eqn12:synthesisConObser}
		\tilde u(t)=G\tilde x(t)\tilde v(t),
	\end{equation}
	where $G=\delta_2[1,2,2,2,1,2,1,2]$, $\tilde v(t)\in\Dt$, $\tilde x(t)\in\Dt_4$,
	into \eqref{eqn9:synthesisConObser}
	to obtain the closed-loop BCN
	\begin{equation}\label{eqn10:synthesisConObser}
		\tilde x(t+1) = \tilde L{\blue\tilde x(t)}\tilde u(t),
	\end{equation}
	where $\tilde L=\delta_4[2,  2,  3,  3,  4,  4,  2,  2]$.
	The observability graph of BCN~\eqref{eqn10:synthesisConObser}
	with output function~\eqref{eqn11:synthesisConObser} is shown in 
	Fig.~\ref{fig2:synthesisConObser}. This graph shows that the BCN is observable by
	Lemma~\ref{lem1:observability_aggregation}.
	\begin{figure}[!htbp]
        \centering
\begin{tikzpicture}[shorten >=1pt,auto,node distance=1.5 cm, scale = 1.0, transform shape,
	>=stealth,inner sep=2pt,state/.style={
	rectangle,minimum size=6mm,rounded corners=3mm,
	very thick,draw=black!50,
	top color=white,bottom color=black!50,font=\ttfamily},
	point/.style={rectangle,inner sep=0pt,minimum size=2pt,fill=}]
	\node[state] (12)                                 {$12$};
	\node[state] (13) [right of = 12]               {$13$};
	\node[state] (23) [below of = 12]               {$23$};
	\node[state] (di) [right of = 23]               {$\diamond$};

	\path 
	      [->] (12) edge node {$1,2$} (23)
		  [->] (di) edge [loop right] node {$1,2$} (di)
		  ;
        \end{tikzpicture}
		\caption{Observability graph of BCN~\eqref{eqn10:synthesisConObser} with output 
		function~\eqref{eqn11:synthesisConObser}.}
	\label{fig2:synthesisConObser}
	\end{figure}

\end{example}


Next we show that a state-feedback controller can make an observable LCN unobservable.
\begin{example}\label{exam7:synthesisConObser}
	Consider the LCN
	\begin{equation}\label{eqn30:synthesisConObser}
		\begin{split}
			\tilde x(t+1) &= \delta_3[1,  3,  3,  2,  1,  1]\tilde x(t)\tilde u(t),\\
			\tilde y(t) &= \delta_2[1,1,2]\tilde x(t),
		\end{split}
	\end{equation}
	where $t\in\N$, $\tilde x(t)\in\Dt_3$, $\tilde u(t),\tilde y(t)\in\Dt$.

	The observability graph of \eqref{eqn30:synthesisConObser} consists of 
	vertex $\{\delta_3^1,\delta_3^2\}$ and the diagonal subgraph $\diamond$,
	and there is no path from $\{\delta_3^1,\delta_3^2\}$ to $\diamond$.
	Then by Lemma~\ref{lem1:observability_aggregation}, the BCN is observable.
%
%

	Substituting state-feedback controller $$\tilde u(t)=\delta_2[1,2,1]\tilde x(t)$$ into \eqref{eqn30:synthesisConObser},
	by Proposition~\ref{prop1:synthesisConObser}, we obtain LCN
	\begin{equation}\label{eqn31:synthesisConObser}
		\begin{split}
			\tilde x(t+1) &= \delta_3[1,2,1]\tilde x(t),\\
			\tilde y(t) &= \delta_2[1,1,2]\tilde x(t).
		\end{split}
	\end{equation}
	There is a self-loop on vertex $\{\delta_3^1,\delta_3^2\}$ in the observability graph 
	of \eqref{eqn31:synthesisConObser}, then by Lemma~\ref{lem1:observability_aggregation},
	\eqref{eqn31:synthesisConObser} is not observable.
\end{example}

Next we show that there exists an unobservable LCN such that no state-feedback controller 
with exogenous input can make it observable. This example also shows that sometimes
state feedback with exogenous input never affects observability of LCNs.

\begin{example}\label{exam5:synthesisConObser}
	Consider the BCN
	\begin{equation}\label{eqn13:synthesisConObser}
		\tilde x(t+1)=L\tilde x(t)\tilde u(t),
	\end{equation}
	where $L=\delta_4[1,  1,  1,  1,  1,  1,  2,  3]$,
	$t\in\N$, $\tilde x(t)\in\Dt_4$, $\tilde u(t)\in\Dt$.

	By Lemma~\ref{lem1:observability_aggregation}, the BCN with output
	function~\eqref{eqn11:synthesisConObser} is not observable, since there exists a path
	$\{\delta_4^1,\delta_4^2\}\xrightarrow[]{\delta_2^1}\{\delta_4^1,\delta_4^1\}
	\xrightarrow[]{\delta_2^1}\{\delta_4^1,\delta_4^1\}$ in its observability graph.

	Substituting an arbitrary state-feedback controller $\tilde u(t)=G\tilde x(t)\tilde v(t)$ with exogenous input 
	$\tilde v(t)$, where $G\in\LM_{2\times 4P}$, $\tilde v(t)\in\Dt_P$,
	$P$ is an arbitrary positive integer, into \eqref{eqn13:synthesisConObser},
	by Proposition~\ref{prop1:synthesisConObser}, we obtain closed-loop LCN
	\begin{equation}\label{eqn14:synthesisConObser}
		\tilde x(t+1)=\left[ \delta_4^1\otimes{\bf 1}_P^T,\delta_4^1\otimes{\bf 1}_P^T,\delta_4^1\otimes{\bf 1}_P^T,
		L_4G_4\right] \tilde x(t)\tilde u(t),
	\end{equation}
	where $L_4G_4=\delta_4[i_1,\dots,i_P]$, $i_1,\dots,i_P\in\llb 2,3\rrb$.

	The observability graph of \eqref{eqn14:synthesisConObser} with output function~\eqref{eqn11:synthesisConObser}
	contains a path $\{\delta_4^1,\delta_4^2\}\xrightarrow[]{\delta_P^1}
	\{\delta_4^1,\delta_4^1\}\xrightarrow[]{\delta_P^1}\{\delta_4^1,\delta_4^1\}$, then the LCN
	is not observable by Lemma~\ref{lem1:observability_aggregation}.
\end{example}

Based on the above discussion, we know that state feedback with exogenous input
sometimes can enforce observability of
an LCN, sometimes cannot. Next we study when a state-feedback controller can enforce observability.

\subsection{Controller synthesis for enforcing observability of LCNs}
\label{subsec:ContSynObs}

The following main result shows that in order to test whether an unobservable LCN can be made
observable by state feedback with exogenous input, it is enough to check whether the LCN can be 
made observable by state feedback.

\begin{theorem}\label{thm2:synthesisConObser}
	Consider an unobservable LCN~\eqref{LCN3:synthesisConObser}. If it can be made observable
	by a state-feedback controller~\eqref{eqn3:synthesisConObser} with exogenous input,
	then it can also be made observable
	by a state-feedback controller (i.e., \eqref{eqn3:synthesisConObser} with $P=1$).
\end{theorem}

\begin{proof}
	Assume an unobservable LCN~\eqref{LCN3:synthesisConObser} and a controller~\eqref{eqn3:synthesisConObser} 
	that makes \eqref{LCN3:synthesisConObser} observable.
	Then by Lemma~\ref{lem1:observability_aggregation}, in the observability graph $\Gr_o$
	of the corresponding closed-loop LCN $\Sig$ obtained by  
	substituting \eqref{eqn3:synthesisConObser} into \eqref{LCN3:synthesisConObser}, 
	\begin{equation}\label{eqn16:synthesisConObser}
		\begin{split}
			&\text{there exists no cycle in its non-diagonal subgraph,}\\
			&\text{and there exists no edge} \\
			&\text{from any non-diagonal vertex to any diagonal vertex.}
		\end{split}
	\end{equation}

	Now consider the structure matrix $G=[G_1,\dots,G_N]$ of \eqref{eqn3:synthesisConObser},
	we choose a new state-feedback controller
	\begin{equation}\label{eqn15:synthesisConObser}
		\tilde u(t) = \left[\col_i(G_1),\dots,\col_i(G_N)\right]\tilde x(t),
	\end{equation}
	where $i\in\llb 1,P\rrb$ is arbitrarily given,
	and consider the observability graph  $\Gr_o'$ of the closed-loop LCN $\Sig'$ obtained by
	substituting \eqref{eqn15:synthesisConObser} into \eqref{LCN3:synthesisConObser}.

	It can be seen that the vertex sets of $\Gr_o$ and $\Gr_o'$ coincide, since $\Sig$ and $\Sig'$ have the same 
	output function. One also sees that for every two vertices $v$ and $v'$ in the vertex set,
	if there exists an edge from $v$ to $v'$ in $\Gr_o'$, then there also exists an edge from $v$ to $v'$ in $\Gr_o$,
	i.e., the edge set of $\Gr_o'$ is a subset of that of $\Gr_o$.
	Hence $\Gr_o'$ also satisfies \eqref{eqn16:synthesisConObser}, and $\Sig'$ is also observable
	by Lemma~\ref{lem1:observability_aggregation}. 
\end{proof}


Because there are infinitely many state-feedback controllers with exogenous input, 
generally one cannot directly check whether an unobservable LCN can be made observable 
by state feedback with exogenous input. 
However, by Theorem~\ref{thm2:synthesisConObser},
one can do the above check because there are totally finitely many
state-feedback controllers. Formally, the following Theorem~\ref{thm3:synthesisConObser} holds.

\begin{theorem}\label{thm3:synthesisConObser}
	An unobservable LCN~\eqref{LCN3:synthesisConObser} can be made observable
	by state feedback with exogenous input if and only if it can be made observable
	by state feedback.
\end{theorem}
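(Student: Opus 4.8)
The plan is to read this as a biconditional and dispatch its two directions separately, recognizing that one of them is literally a theorem already proved and the other is immediate from the definition of a closed-loop controller. So I would not attempt any fresh graph-theoretic argument; instead I would assemble the statement from parts already in hand.

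For the ``only if'' direction---if the unobservable LCN \eqref{LCN3:synthesisConObser} can be synthesized to be observable by some state-feedback controller \eqref{eqn3:synthesisConObser}, then it can be synthesized to be observable by a closed-loop one---I would simply invoke Theorem \ref{thm2:synthesisConObser}, whose conclusion is exactly that a working general controller can be replaced by a closed-loop controller (the case $P=1$) that still renders the LCN observable. Nothing more is required. For the ``if'' direction, I would observe that a closed-loop state-feedback controller is by definition nothing but a state-feedback controller \eqref{eqn3:synthesisConObser} in the special case $P=1$, with $\tilde v(t)\in\Dt_1$ a single constant input. Hence any closed-loop controller that makes the LCN observable is itself a particular instance of a state-feedback controller accomplishing the same task, and the implication holds trivially. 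Combining the two directions yields the stated equivalence.

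Honestly there is essentially no obstacle at the level of this theorem: all the genuine content---that the edge set of the observability graph $\Gr'$ obtained from the column-restricted closed-loop controller \eqref{eqn15:synthesisConObser} is a subgraph of the original $\Gr$, so that condition \eqref{eqn16:synthesisConObser} is inherited---lives inside Theorem \ref{thm2:synthesisConObser}. The only point demanding a word of care is the terminological one: I must make clear that the closed-loop controllers ($P=1$) form a genuine subclass of the general state-feedback controllers, so that the backward implication is a legitimate specialization rather than a vacuous restatement. With that noted, the proof is just the pairing of Theorem \ref{thm2:synthesisConObser} with this trivial inclusion, which is also why the theorem is useful: it converts the search over infinitely many state-feedback controllers into a check over the finitely many closed-loop ones.
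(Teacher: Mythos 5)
Your proposal is correct and matches the paper exactly: the paper derives Theorem \ref{thm3:synthesisConObser} directly from Theorem \ref{thm2:synthesisConObser}, with the converse direction being immediate since a closed-loop controller is the $P=1$ special case of a state-feedback controller. Nothing further is needed.
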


By Proposition~\ref{prop1:synthesisConObser} and the proof of Theorem~\ref{thm2:synthesisConObser}, the
following result holds. The subsequent discussions on observability synthesis will be based on this result.
\begin{theorem}\label{thm7:synthesisConObser}
	An LCN~\eqref{LCN3:synthesisConObser} can be made observable by state feedback with exogenous input
	if and only if there exist $i_1,\dots,i_N\in\llb 1,M \rrb$ such that the BN
	\begin{equation}\label{LCN6:synthesisConObser} 
	\begin{split}
		\tilde x (t + 1) &= [\col_{i_1}(L_1),\dots,\col_{i_N}(L_N)] \tilde x (t) ,\\
		\tilde y (t ) &= H \tilde x (t),
	\end{split}
\end{equation}
is observable.
\end{theorem}

\begin{remark}\label{rem3:synthesisConObser}
	In order to verify whether an unobservable LCN~\eqref{LCN3:synthesisConObser} can be made 
	observable by state feedback, one should substitute several state-feedback controllers into the LCN,
	and then check whether there exists an observable closed-loop LCN.
	Now we analyze how many state-feedback controllers
	should be substituted into the LCN in order to do the verification.

Consider LCN~\eqref{LCN3:synthesisConObser}, it is sufficient to substitute
$\prod_{i=1}^N|\col(L_i)|$ state-feedback controllers into the LCN to do the above verification.
It is because, in order not to do repetitive check,
for every two of the chosen state-feedback controllers,
where their structure matrices are $G_1=[g^1_1,\dots,g^1_N]$ and $G_2=[g^2_1,\dots,g^2_N]$, respectively,
they must satisfy (see \eqref{LCN6:synthesisConObser})
\begin{equation}\label{eqn17:synthesisConObser}
	\left[L_1g^1_1,\dots,L_Ng^1_N\right]\ne\left[L_1g^2_1,\dots,L_Ng^2_N\right],
\end{equation}
which means the obtained closed-loop LCNs are different.
On the other hand, in order not to lose any necessary check, it is sufficient to choose
$\prod_{i=1}^N|\col(L_i)|$ state-feedback
controllers every two of which satisfy \eqref{eqn17:synthesisConObser} to do the above check.
\end{remark}

Next we give some special conditions for whether an unobservable LCN can be made
observable by state feedback, which can be checked under much less computational cost than the equivalent condition
in Theorem~\ref{thm7:synthesisConObser}. In addition, using these conditions we can furthermore reduce 
the number (shown in Remark~\ref{rem3:synthesisConObser})
of state-feedback controllers that are needed to be substituted into the unobservable LCN to do the above check.

\begin{theorem}\label{thm4:synthesisConObser}
	Consider an unobservable LCN~\eqref{LCN3:synthesisConObser}. The LCN cannot be made
	observable by any state-feedback controller if at least one of the following holds.
	\begin{myenumerate}
		\item\label{item1:synthesisConObser}
			$L$ satisfies $L_j=L_k=\delta_N^l\otimes{\bf1}_M^T$ for some different $j,k\in\llb 1,N\rrb$ 
			and some $l\in\llb 1,N\rrb$, where $H\delta_N^j=H\delta_N^k$.
		\item\label{item2:synthesisConObser}
			There exist different $j,k\in\llb 1,N\rrb$ such that $L_j=\delta_N^j\otimes{\bf1}_M^T$ and 
			$L_k=\delta_N^k\otimes{\bf1}_M^T$, or $L_j=\delta_N^k\otimes{\bf1}_M^T$ and 
			$L_k=\delta_N^j\otimes{\bf1}_M^T$, where $H\delta_N^j=H\delta_N^k$.
	\end{myenumerate}
\end{theorem}

\begin{proof} 
	Assume \eqref{item1:synthesisConObser} holds, then in the observability graph of the closed-loop LCN
	obtained by feeding an arbitrary state-feedback controller into the original unobservable LCN,
	there exists an edge 
	$\{\delta_N^j,\delta_N^k\}\xrightarrow[]{\delta_M^1}\{\delta_N^l,\delta_N^l\}$, where $\{\delta_N^l,
	\delta_N^l\}$ is a diagonal vertex. Then by Lemma~\ref{lem1:observability_aggregation}, no obtained
	closed-loop LCN is observable.

	Assume \eqref{item2:synthesisConObser} holds,  then in the observability graph of the closed-loop LCN
	obtained by feeding an arbitrary state-feedback controller into the original unobservable LCN,
	there exists a self-loop $\{\delta_N^j,\delta_N^k\}\xrightarrow[]{\delta_M^1}\{\delta_N^j,\delta_N^k\}$
	on the non-diagonal vertex $\{\delta_N^j,\delta_N^k\}$,
	then also by Lemma~\ref{lem1:observability_aggregation}, no obtained closed-loop LCN is observable.
\end{proof}

By Theorem~\ref{thm4:synthesisConObser} and the previously obtained results, we show how to furthermore reduce
the number (shown in Remark~\ref{rem3:synthesisConObser}) of state-feedback controllers that are needed
to be substituted into an unobservable LCN to check whether the LCN can be made observable by state feedback.
Consider an unobservable LCN~\eqref{LCN3:synthesisConObser} and a state-feedback controller
\begin{equation}\label{eqn18:synthesisConObser}
	\tilde u(t) = G\tilde x(t) = [g_1,\dots,g_N]\tilde x(t),
\end{equation}
where $g_i\in\LM_{M\times 1}$, $i\in\llb 1,N\rrb$.
Substituting \eqref{eqn18:synthesisConObser} into \eqref{LCN3:synthesisConObser}, we obtain a closed-loop LCN
\begin{equation}\label{eqn19:synthesisConObser}
\begin{split}
 \tilde x (t + 1) &= [L_1g_1,\dots,L_Ng_N] \tilde x (t) ,\\
 \tilde y (t ) &= H \tilde x (t) 
 \end{split}
\end{equation}
by Proposition~\ref{prop1:synthesisConObser}, which is consistent with Eqn.~\eqref{LCN6:synthesisConObser}.

Denote 
\begin{equation}\label{eqn20:synthesisConObser}
	\col(H)=\left\{ \delta_Q^{k_1},\dots,\delta_Q^{k_\ell} \right\},
\end{equation}
where $\delta_Q^{k_1},\dots,\delta_Q^{k_\ell}$
are distinct. For each $i\in\llb 1,l\rrb$, we denote 
\begin{equation}\label{eqn21:synthesisConObser}
	\begin{split}
	S_{k_i}:=&\left\{\left.\delta_N^j\right|j\in\llb 1,N\rrb,H\delta_N^j=
	\delta_Q^{k_i}\right\},\\
	c_i :=& \left|S_{k_i}\right|,\\
	S_{k_i} =:& \left\{\delta_N^{i_1},\dots,\delta_N^{i_{c_i}}\right\}.
	\end{split}
\end{equation}
The collection of state sets $S_{k_1},\dots,S_{k_\ell}$ partitions $\Dt_N$.

In order to make \eqref{eqn19:synthesisConObser} observable, we must assume that 
for each $i\in\llb 1,\ell\rrb$, for all different $j,k\in\llb 1,N\rrb$ with $\delta_N^j,\delta_N^k\in S_{k_i}$, 
it holds that $L_jg_j\ne L_kg_k$. Otherwise, in the observability graph of \eqref{eqn19:synthesisConObser},
there exists an edge $\{\delta_N^j,\delta_N^k\}\to\{L_jg_j, L_kg_k\}$, 
where $\{\delta_N^j,\delta_N^k\}$ is a non-diagonal vertex, and $\{L_jg_j, L_kg_k\}$ is a diagonal vertex,
which shows that \eqref{eqn19:synthesisConObser} is not observable by Lemma~\ref{lem1:observability_aggregation}.
Hence in order to make \eqref{eqn19:synthesisConObser} observable, we must furthermore assume that
\begin{equation}\label{eqn38:synthesisConObser} 
	\begin{split}
		&\text{for each }i\in\llb 1,\ell\rrb,\text{ it holds that}\\
		&|\{L_jg_j|j\in\llb 1,N\rrb,\delta_N^j\in S_{k_i}\}| = |S_{k_i}|=c_i,
	\end{split}
\end{equation}
i.e., $L_jg_j$ with $\dt_N^j\in S_{k_i}$ are distinct.  

The above analysis yields the following result stronger than the first implication
of Theorem~\ref{thm4:synthesisConObser} (i.e., \eqref{item1:synthesisConObser} implies that
\eqref{LCN3:synthesisConObser} cannot be made observable by any state-feedback controller).

\begin{theorem}\label{thm5:synthesisConObser}
	Given an unobservable LCN~\eqref{LCN3:synthesisConObser}, assume there exists $i\in\llb 1,\ell\rrb$
	($\ell$ is defined in \eqref{eqn20:synthesisConObser})
	such that for all $j_1,\dots,j_{c_i}\in\llb 1,M\rrb$ ($c_i$ is defined in \eqref{eqn21:synthesisConObser}),
	$|\{\col_{j_1}(L_{i_1}),\dots,\col_{j_{c_i}}(L_{i_{c_i}})\}|<c_i$, then \eqref{LCN3:synthesisConObser} cannot
	be made observable by any state-feedback controller.
\end{theorem}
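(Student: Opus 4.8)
The plan is to reduce the claim to closed-loop controllers and then exploit the injectivity requirement already isolated in the discussion preceding the theorem. By Theorem \ref{thm3:synthesisConObser}, the LCN can be synthesized to be observable by some state-feedback controller if and only if it can be synthesized to be observable by some closed-loop state-feedback controller. Hence it suffices to show that, under the stated hypothesis, no closed-loop controller \eqref{eqn18:synthesisConObser} can render \eqref{eqn19:synthesisConObser} observable.

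First I would fix an arbitrary closed-loop controller $G=[g_1,\dots,g_N]$ with $g_r\in\LM_{M\times1}$, and restrict attention to the output class $S_{k_i}=\{\delta_N^{i_1},\dots,\delta_N^{i_{c_i}}\}$ singled out by the hypothesis (see \eqref{eqn21:synthesisConObser}). For each state $\delta_N^{i_s}$ the controller selects a single input column, that is, $g_{i_s}=\delta_M^{j_s}$ for some $j_s\in[1,M]$, so the successor of $\delta_N^{i_s}$ in the state-feedback LCN \eqref{eqn19:synthesisConObser} is exactly $L_{i_s}g_{i_s}=\col_{j_s}(L_{i_s})$. Thus the collection of successors of $S_{k_i}$ is precisely $\{\col_{j_1}(L_{i_1}),\dots,\col_{j_{c_i}}(L_{i_{c_i}})\}$ for the particular tuple $(j_1,\dots,j_{c_i})$ determined by $G$.

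Next I would invoke the hypothesis. Since it holds for all tuples $(j_1,\dots,j_{c_i})\in[1,M]^{c_i}$, it holds in particular for the one realized by $G$, giving $|\{\col_{j_1}(L_{i_1}),\dots,\col_{j_{c_i}}(L_{i_{c_i}})\}|<c_i$. By pigeonhole there are distinct indices $s,t$ with $L_{i_s}g_{i_s}=L_{i_t}g_{i_t}=:\delta_N^r$. Because $\delta_N^{i_s},\delta_N^{i_t}\in S_{k_i}$ satisfy $H\delta_N^{i_s}=H\delta_N^{i_t}=\delta_Q^{k_i}$, the pair $\{\delta_N^{i_s},\delta_N^{i_t}\}$ is a non-diagonal vertex of the observability graph of \eqref{eqn19:synthesisConObser}, and the coincidence of their successors yields an edge $\{\delta_N^{i_s},\delta_N^{i_t}\}\to\{\delta_N^r,\delta_N^r\}$ from this non-diagonal vertex to a diagonal vertex. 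By Proposition \ref{prop1:observability_aggregation} (equivalently, by the remark that an edge from a non-diagonal to a diagonal vertex destroys observability), \eqref{eqn19:synthesisConObser} is not observable. As $G$ was arbitrary, no closed-loop controller works, and the conclusion follows from Theorem \ref{thm3:synthesisConObser}.

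The argument is essentially routine once the reduction to closed-loop controllers is in place. The only point requiring care is the identification $L_{i_s}g_{i_s}=\col_{j_s}(L_{i_s})$ together with the observation that ranging over all closed-loop controllers is the same as ranging over all column tuples $(j_1,\dots,j_{c_i})$, so that the universally quantified hypothesis genuinely covers every controller. I therefore expect the main obstacle to be presentational rather than mathematical, namely keeping the bookkeeping between controllers and column tuples transparent.
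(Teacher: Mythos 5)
Your proposal is correct and follows essentially the same route as the paper's proof: fix an arbitrary closed-loop controller, observe that the hypothesis forces two states in $S_{k_i}$ to share a successor, and conclude non-observability from the resulting edge into a diagonal vertex via Proposition \ref{prop1:observability_aggregation}. The only difference is presentational — you make explicit the reduction via Theorem \ref{thm3:synthesisConObser} and the pigeonhole/column-tuple bookkeeping that the paper leaves implicit in the phrase ``by assumption, there exist \dots $\alpha,\beta$''.
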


\begin{proof}
	Consider an arbitrary given state-feedback controller~\eqref{eqn18:synthesisConObser}
	and the corresponding closed-loop LCN~\eqref{eqn19:synthesisConObser} obtained by
	substituting \eqref{eqn18:synthesisConObser} into
	the unobservable \eqref{LCN3:synthesisConObser}.
	By assumption, there exists $i\in\llb 1,\ell\rrb$ and different $\alpha,\beta\in\llb 1,N\rrb$ such that
	$\delta_N^{\alpha},\delta_N^{\beta}\in S_{k_i}$ and $L_{\alpha}g_{\alpha}=
	L_{\beta}g_{\beta}$. Hence in the observability graph of 
	\eqref{eqn19:synthesisConObser}, there exists an edge $\{\delta_N^{\alpha},\delta_N^{\beta}\}
	\to \{L_{\alpha}g_{\alpha},L_{\beta}g_{\beta}\}$,
	where $\{\delta_N^{\alpha},\delta_N^{\beta}\}$ is a non-diagonal vertex, 
	$\{L_{\alpha}g_{\alpha},L_{\beta}g_{\beta}\}$ is a diagonal vertex. By 
	Lemma~\ref{lem1:observability_aggregation}, \eqref{eqn19:synthesisConObser} is not observable.
\end{proof}

\begin{remark}\label{rem4:synthesisConObser}
	Theorem~\ref{thm5:synthesisConObser} is stronger than the first implication in
	Theorem~\ref{thm4:synthesisConObser}, because \eqref{item1:synthesisConObser} in
	Theorem~\ref{thm4:synthesisConObser} is stronger the assumption in Theorem~\ref{thm5:synthesisConObser},
	but they have the same conclusion
	(i.e., \eqref{LCN3:synthesisConObser} cannot be made observable by any state-feedback controller).
\end{remark}

Based on these analysis, the following result holds.
\begin{theorem}\label{thm6:synthesisConObser}
	Consider an unobservable LCN~\eqref{LCN3:synthesisConObser}.
	In order to verify whether \eqref{LCN3:synthesisConObser} can be made observable by state feedback,
	it is sufficient to substitute 
	\begin{equation}\label{eqn22:synthesisConObser}
		\prod_{i=1}^{\ell}Num_i	
	\end{equation} 
	state-feedback controllers of the form \eqref{eqn18:synthesisConObser} into
	\eqref{LCN3:synthesisConObser} to check whether there exists an observable closed-loop LCN,
	where \begin{equation}\label{eqn29:synthesisConObser}\begin{split}
		Num_i=\left|\left\{ \left.(\alpha_{i_1},\dots,\alpha_{i_{c_i}})\right|\right.\right.&
		\alpha_{i_k}\in\col(L_{i_k}),k\in\llb 1,c_i\rrb,\\
		&\alpha_{i_1},\dots,\alpha_{i_{c_i}}\text{ are distinct}
		\}|,\end{split}\end{equation}
	$\ell$ is defined in \eqref{eqn20:synthesisConObser}, 
	$c_i$ and $i_1,\dots,i_{c_i}$ are defined in \eqref{eqn21:synthesisConObser}.
	In addition, for every two of the above \eqref{eqn22:synthesisConObser}
	chosen state-feedback controllers,
	their structure matrices $G_1=[g^1_1,\dots,g^1_N]$ and $G_2=[g^2_1,\dots,g^2_N]$
	must satisfy  
	\begin{equation}\label{eqn23:synthesisConObser}
		\left[L_{j_1}g^1_{j_1},\dots,L_{j_{c_j}}g^1_{j_{c_j}}\right]\ne
		\left[L_{j_1}g^2_{j_1},\dots,L_{j_{c_j}}g^2_{j_{c_j}}\right]
	\end{equation} 
	for some $j\in\llb 1,\ell\rrb$. Particularly, if \eqref{eqn22:synthesisConObser} is equal to $0$,
	then the unobservable 
	LCN~\eqref{LCN3:synthesisConObser} cannot be made observable by state feedback.
\end{theorem}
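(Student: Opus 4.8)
The plan is to turn the \emph{a priori} infinite search over state-feedback controllers into a finite, non-redundant enumeration of closed-loop controllers, and to show that the exact size of that enumeration is $\prod_{i=1}^{\ell}Num_i$. First I would invoke Theorem \ref{thm3:synthesisConObser} to restrict attention to closed-loop controllers \eqref{eqn18:synthesisConObser}: the unobservable LCN \eqref{LCN3:synthesisConObser} can be synthesized to be observable by state feedback if and only if some closed-loop controller \eqref{eqn18:synthesisConObser} achieves it. Each such $G=[g_1,\dots,g_N]$ yields the state-feedback LCN \eqref{eqn19:synthesisConObser}, whose transition map is captured entirely by the column list $[L_1g_1,\dots,L_Ng_N]$ together with the fixed output matrix $H$; hence its observability graph, and therefore its observability verdict via Proposition \ref{prop1:observability_aggregation}, depends only on these $N$ columns.

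Next I would use the partition $\{S_{k_1},\dots,S_{k_\ell}\}$ of $\Dt_N$ from \eqref{eqn20:synthesisConObser}--\eqref{eqn21:synthesisConObser} together with the necessary condition established in the discussion leading to Theorem \ref{thm5:synthesisConObser}: if two distinct states $\delta_N^j,\delta_N^k$ lie in the same block $S_{k_i}$ and $L_jg_j=L_kg_k$, then the observability graph contains an edge from the non-diagonal vertex $\{\delta_N^j,\delta_N^k\}$ to the diagonal vertex $\{L_jg_j,L_kg_k\}$, so by Proposition \ref{prop1:observability_aggregation} the LCN is unobservable. Thus any controller that could possibly yield observability must assign, within each block $S_{k_i}$, pairwise distinct images $L_{i_1}g_{i_1},\dots,L_{i_{c_i}}g_{i_{c_i}}$. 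Since $L_{i_k}g_{i_k}$ ranges exactly over $\col(L_{i_k})$ as $g_{i_k}$ ranges over $\Dt_M$, the admissible within-block assignments for block $i$ are precisely the pairwise distinct tuples counted by $Num_i$ in \eqref{eqn29:synthesisConObser}. In particular, if $Num_i=0$ for some $i$, no admissible controller exists, which gives $\prod_iNum_i=0$ and proves the final assertion of the theorem.

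I would then argue that the blocks decouple and that redundancy is exactly captured by \eqref{eqn23:synthesisConObser}. Because $\{S_{k_i}\}$ partitions the state set, the full column list $[L_1g_1,\dots,L_Ng_N]$ is determined block by block, so an admissible controller amounts to an independent choice of one admissible tuple per block, and the number of distinct transition matrices so obtained is $\prod_{i=1}^{\ell}Num_i$. Two admissible controllers $G^1,G^2$ agreeing on every block's within-block tuple produce the identical transition matrix, hence the identical LCN \eqref{eqn19:synthesisConObser} and the identical observability verdict, so testing both is redundant; conversely, condition \eqref{eqn23:synthesisConObser} — existence of a block $j$ on which the restricted column lists differ — is exactly the statement that the two transition matrices differ. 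Selecting $\prod_iNum_i$ controllers that are pairwise distinct in the sense of \eqref{eqn23:synthesisConObser} therefore realizes each distinct admissible transition matrix exactly once.

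Finally I would assemble sufficiency: any closed-loop controller violating the within-block distinctness condition is already unobservable and may be skipped, while any controller satisfying it produces one of the $\prod_iNum_i$ chosen candidates, so an observable state-feedback LCN exists if and only if one of these candidates is observable; by Theorem \ref{thm3:synthesisConObser} this settles the original general state-feedback question. The step deserving the most care is the decoupling-and-exactness argument: I must verify that restricting to within-block distinct assignments loses nothing (guaranteed by the necessary condition), that cross-block image coincidences genuinely cannot affect the observability graph (states in different blocks have different outputs, so such pairs are never vertices of the graph), and that \eqref{eqn23:synthesisConObser} separates distinct transition matrices from redundant duplicates precisely, so that the count $\prod_iNum_i$ is neither an over- nor an under-count.
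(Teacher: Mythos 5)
Your proposal is correct and follows essentially the same route as the paper: reduce to closed-loop controllers via Theorem \ref{thm3:synthesisConObser}, use the within-block distinctness requirement from the analysis preceding Theorem \ref{thm5:synthesisConObser} to restrict to admissible controllers counted by $\prod_{i=1}^{\ell}Num_i$, identify \eqref{eqn23:synthesisConObser} as exactly the non-redundancy criterion on the resulting transition matrices, and handle the zero case by noting no admissible controller exists. Your write-up is in fact more explicit than the paper's terse proof on the decoupling of blocks and on why cross-block coincidences are irrelevant (such state pairs are not vertices of the observability graph), but these are elaborations, not a different argument.
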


\begin{proof}
	Firstly, observe that if \eqref{eqn22:synthesisConObser} is equal to $0$, then the assumption in 
	Theorem~\ref{thm5:synthesisConObser} is satisfied, and then the unobservable LCN~\eqref{LCN3:synthesisConObser}
	cannot be made observable by state feedback.

	On the contrary, if the assumption in Theorem~\ref{thm5:synthesisConObser} is not satisfied,
	then for all $i\in\llb 1,\ell\rrb$,
	there exist $j_1,\dots,j_{c_i}\in\llb 1,M\rrb$ such that
	$$\left|\left\{\col_{j_1}(L_{i_1}),\dots,\col_{j_{c_i}}(L_{i_{c_i}})\right\}\right|=c_i,$$
	which implies that $Num_i>0$.
	Hence \eqref{eqn22:synthesisConObser} is greater than $0$.

	Secondly, if \eqref{eqn22:synthesisConObser} is greater than $0$,
	we can find state-feedback controllers such that the observability graphs of the obtained closed-loop
	LCNs must not have an edge from a non-diagonal vertex to a diagonal vertex. We check whether some of 
	these closed-loop LCNs is observable. In order not to do repetitive check,
	for every two of the found state-feedback controllers, where their structure matrices are denoted by
	$G_1=[g^1_1,\dots,g^1_N]$ and $G_2=[g^2_1,\dots,g^2_N]$, respectively,
	they must satisfy that there exists $i\in\llb 1,\ell\rrb$ such that \eqref{eqn23:synthesisConObser} holds,
	which implies that the obtained two closed-loop LCNs are different.
	On the other hand, in order not to lose any necessary check, it is sufficient to choose 
	\eqref{eqn22:synthesisConObser} state-feedback controllers every two of which satisfy
	\eqref{eqn23:synthesisConObser} to do the above check. 
\end{proof}

\begin{remark}\label{rem5:synthesisConObser}
It is easy to see that \eqref{eqn22:synthesisConObser} is no greater than the number
$\prod_{i=1}^N|\col(L_i)|$
shown in Remark~\ref{rem3:synthesisConObser}, hence Theorem~\ref{thm6:synthesisConObser}
strengthens the result shown in Remark~\ref{rem3:synthesisConObser}.

Note that the above number \eqref{eqn22:synthesisConObser} is obtained by avoiding the existence
of an edge from a non-diagonal vertex to a diagonal vertex in the observability graph
of an obtained closed-loop LCN. In addition to avoiding this, we must also avoid the existence 
of cycles in the non-diagonal subgraphs of the observability graphs,
so the minimal number of state-feedback controllers  
that are needed to do the above check could be further reduced.
\end{remark}



In the remainder of this section, we give an observability synthesis algorithm based on 
Lemma~\ref{lem1:observability_aggregation} and the results obtained in Section~\ref{subsec:ContSynObs}.
To this end, we first give a motivating example.

\subsection{A motivating example}

\begin{example}\label{exam6:synthesisConObser}
	Consider the following LCN
	\begin{equation}\label{eqn24:synthesisConObser}
		\begin{split}
			\tilde x(t+1) = \delta_8[&1,1,2,3,2,3,1,4,3,5,7,6,6,7,8,1,\\
			&2,3,7,6,1,2,3,4,3,4,7,8,5,6,7,4]\\
			&\tilde x(t)\tilde u(t),\\
			\tilde y(t) = \delta_4[&1,1,1,1,1,2,2,2]\tilde x(t),
		\end{split}
	\end{equation}
	where $t\in\N$, $\tilde x(t)\in\Dt_8$, $\tilde u(t),\tilde y(t)\in\Delta_4$. In the observability graph
	of \eqref{eqn24:synthesisConObser}, there exists a path 
	$$\left\{ \delta_8^2,\delta_8^5 \right\}\xrightarrow[]{\delta_4^2}
	\left\{ \delta_8^3,\delta_8^3 \right\}
	\xrightarrow[]{\delta_4^1}\left\{ \delta_8^3,\delta_8^3 \right\},$$ hence
	\eqref{eqn24:synthesisConObser} is not observable by Lemma~\ref{lem1:observability_aggregation}.

	\begin{figure}[!htbp]
        \centering
\begin{tikzpicture}[shorten >=1pt,auto,node distance=1.5 cm, scale = 1.0, transform shape,
	>=stealth,inner sep=2pt,state/.style={
	rectangle,minimum size=6mm,rounded corners=3mm,
	very thick,draw=black!50,
	top color=white,bottom color=black!50,font=\ttfamily},
	point/.style={rectangle,inner sep=0pt,minimum size=2pt,fill=}]
	\node[state] (68)                               {$68$};
	\node[state] (78) [right of = 68]               {$78$};
	\node[state] (35) [right of = 78]               {$35$};
	\node[state] (67) [right of = 35]               {$67$};
	\node[state] (15) [below of = 68]               {$15$};
	\node[state] (12) [right of = 15]               {$12$};
	\node[state] (23) [right of = 12]               {$23$};
	\node[state] (13) [right of = 23]               {$13$};
	\node[state] (14) [below of = 15]               {$14$};
	\node[state] (24) [right of = 14]               {$24$};
	\node[state] (25) [right of = 24]               {$25$};
	\node[state] (di) [right of = 25]               {$\diamond$};
	\node[state] (34) [below of = 14]               {$34$};
	\node[state] (45) [right of = 34]               {$45$};


	\path [->] (12) edge [loop right] (12)
		  [->] (13) edge [loop left] (13)
		  [->] (23) edge [loop left] (23)
	      [->] (68) edge (15)
		  [->] (15) edge (12)
		  [->] (78) edge (35) 
		  [->] (35) edge (23)
		  [->] (67) edge (13)
		  [->] (25) edge (di)
		  [->] (di) edge [loop right] (di)
		  ;
        \end{tikzpicture}
		\caption{Observability graph of LCN~\eqref{eqn26:synthesisConObser}.}
		\label{fig3:synthesisConObser}
	\end{figure}

	Next we try to find a state-feedback controller (if any)
	\begin{equation}\label{eqn25:synthesisConObser}
		\tilde u(t)=\delta_4[i_1,\dots,i_8]\tilde x(t)
	\end{equation}to 
	make \eqref{eqn24:synthesisConObser} observable, where $i_1,\dots,i_8\in\llb 1,4\rrb$.

	We might as well firstly choose $i_1=\cdots=i_8=1$. Substituting this controller 
	into \eqref{eqn24:synthesisConObser}, we obtain the closed-loop LCN
	\begin{equation}\label{eqn26:synthesisConObser}
		\begin{split}
			\tilde x(t+1) &= \delta_8[1,2,3,6,2,1,3,5]\tilde x(t),\\
			\tilde y(t) &= \delta_4[1,1,1,1,1,2,2,2]\tilde x(t).
		\end{split}
	\end{equation}
	The observability graph of \eqref{eqn26:synthesisConObser} (shown in Fig.~\ref{fig3:synthesisConObser}) 
	contains self-loops in its non-diagonal subgraph,
	hence \eqref{eqn26:synthesisConObser} is not observable by Lemma~\ref{lem1:observability_aggregation}.

	Secondly, we try to modify $i_1,\dots,i_8$ to make \eqref{eqn24:synthesisConObser} observable.
	The basic idea guiding us to choose new $i_1,\dots,i_8$ is to remove all self-loops 
	and all edges from a non-diagonal vertex to a diagonal vertex in Fig.~\ref{fig3:synthesisConObser}.
	Since there exists a self-loop on vertex $\{\delta_8^1,\delta_8^2\}$ originally,
	we keep $i_1=1$ invariant, and change $i_2$ from $1$ to $2$, then the self-loop on $\{\delta_8^1,\delta_8^2\}$
	is changed to an edge $\{\delta_8^1,\delta_8^2\}\to\{\delta_8^1,\delta_8^3\}$.
	Since there also exists a self-loop on vertex $\{\delta_8^1,\delta_8^3\}$ originally,
	we change $i_3$ to $2$, then the self-loop on $\{\delta_8^1,\delta_8^3\}$
	is changed to an edge $\{\delta_8^1,\delta_8^3\}\to\{\delta_8^1,\delta_8^5\}$. 
	Now consider vertex $\{\delta_8^1,\delta_8^5\}$. Since originally $\{\delta_8^1,\delta_8^5\}$ goes to the 
	self-loop on $\{\delta_8^1,\delta_8^2\}$, we change $i_5$ from $1$ to $3$, then there exists no edge
	from vertex $\{\delta_8^1,\delta_8^5\}$ to any vertex. After doing these modifications,
	there exists no edge from vertex $\{\delta_8^2,\delta_8^5\}$ to any vertex
	(originally there is a path from $\{\delta_8^2,\delta_8^5\}$ to the diagonal subgraph),
	there is a path from $\{\delta_8^2,\delta_8^3\}$ to $\{\delta_8^3,\delta_8^5\}$, and 
	there exists no edge from $\{\delta_8^3,\delta_8^5\}$ to any vertex
	(originally there is a self-loop on $\{\delta_8^2,\delta_8^3\}$).
	
	Now we substitute the new state-feedback controller
	\begin{equation}\label{eqn28:synthesisConObser}
		\tilde u(t)=\delta_4[1,2,2,1,3,1,1,1]\tilde x(t)
	\end{equation}
	into \eqref{eqn24:synthesisConObser}, we then obtain a new closed-loop LCN
	\begin{equation}\label{eqn27:synthesisConObser}
		\begin{split}
			\tilde x(t+1) &= \delta_8[1,3,5,6,7,1,3,5]\tilde x(t),\\
			\tilde y(t) &= \delta_4[1,1,1,1,1,2,2,2]\tilde x(t).
		\end{split}
	\end{equation}
	Luckily, in the observability graph of \eqref{eqn27:synthesisConObser} (shown in 
	Fig.~\ref{fig4:synthesisConObser}), there is no path from any non-diagonal vertex to any cycle,
	hence \eqref{eqn27:synthesisConObser} is observable by Lemma~\ref{lem1:observability_aggregation}.
	Hence LCN~\eqref{eqn24:synthesisConObser} can be made observable by 
	state-feedback controller~\eqref{eqn28:synthesisConObser}.

	Now we compute the upper bounds on the numbers of state-feedback controllers that
	are needed to be tested in order to verify whether LCN~\eqref{eqn24:synthesisConObser} can be made
	observable by state feedback obtained respectively in Remark~\ref{rem3:synthesisConObser} and
	Theorem~\ref{thm6:synthesisConObser}.

	For \eqref{eqn24:synthesisConObser}, we have $$\col(H)=\left\{ \delta_4^1,\delta_4^2 \right\},$$
	where we denote $k_1=1$ and $k_2=2$. Then
	\begin{align*}
		S_{k_1} &= \left\{ \delta_8^1,\delta_8^2,\delta_8^3,\delta_8^4,\delta_8^5 \right\},
		\quad c_1=\left|S_{k_1}\right|=5,\\
		S_{k_2} &= \left\{ \delta_8^6,\delta_8^7,\delta_8^8 \right\},\quad c_2=\left|S_{k_2}\right|=3,
	\end{align*}
	$$\begin{array}{ll}
		\col(L_1)=\left\{ \delta_8^1,\delta_8^2,\delta_8^3 \right\},
		&\col(L_2)=\left\{ \delta_8^1,\delta_8^2,\delta_8^3,\delta_8^4 \right\},\\
		\col(L_3)=\left\{\delta_8^3,\delta_8^5,\delta_8^6,\delta_8^7 \right\}, 
		&\col(L_4)=\left\{\delta_8^1,\delta_8^6,\delta_8^7,\delta_8^8 \right\},\\
		\col(L_5)=\left\{\delta_8^2,\delta_8^3,\delta_8^6,\delta_8^7 \right\},
		&\col(L_6)=\left\{\delta_8^1,\delta_8^2,\delta_8^3,\delta_8^4 \right\},\\
		\col(L_7)=\left\{\delta_8^3,\delta_8^4,\delta_8^7,\delta_8^8 \right\}, 
		&\col(L_8)=\left\{\delta_8^4,\delta_8^5,\delta_8^6,\delta_8^7 \right\}.
	\end{array}$$

	As shown in Remark~\ref{rem3:synthesisConObser}, the upper bound is 
	$$\prod_{i=1}^8\left|\col(L_i)\right|=3\cdot 4^7=49152.$$

	As shown in Theorem~\ref{thm6:synthesisConObser}, 
	the corresponding upper bound \eqref{eqn22:synthesisConObser} is equal to $$Num_1\cdot Num_2=153\cdot
	46=7038,$$
	where $Num_i$'s are defined by \eqref{eqn29:synthesisConObser}.

	\begin{figure}[!htbp]
        \centering
\begin{tikzpicture}[shorten >=1pt,auto,node distance=1.5 cm, scale = 1.0, transform shape,
	>=stealth,inner sep=2pt,state/.style={
	rectangle,minimum size=6mm,rounded corners=3mm,
	very thick,draw=black!50,
	top color=white,bottom color=black!50,font=\ttfamily},
	point/.style={rectangle,inner sep=0pt,minimum size=2pt,fill=}]
	\node[state] (68)                               {$68$};
	\node[state] (78) [right of = 68]               {$78$};
	\node[state] (35) [right of = 78]               {$35$};
	\node[state] (67) [right of = 35]               {$67$};
	\node[state] (15) [below of = 68]               {$15$};
	\node[state] (12) [right of = 15]               {$12$};
	\node[state] (23) [right of = 12]               {$23$};
	\node[state] (13) [right of = 23]               {$13$};
	\node[state] (14) [below of = 15]               {$14$};
	\node[state] (24) [right of = 14]               {$24$};
	\node[state] (25) [right of = 24]               {$25$};
	\node[state] (34) [below of = 14]               {$34$};
	\node[state] (45) [right of = 34]               {$45$};
	\node[state] (di) [right of = 25]               {$\diamond$};


	\path 
	      [->] (12) edge [bend left] (13)
		  [->] (13) edge [bend left] (15)
		  [->] (23) edge (35) 
		  [->] (68) edge (15)
		  [->] (67) edge (13)
		  [->] (78) edge (35)
		  [->] (45) edge (67)
		  [->] (di) edge [loop right] (di)
		  ;
        \end{tikzpicture}
		\caption{Observability graph of LCN~\eqref{eqn27:synthesisConObser}.}
		\label{fig4:synthesisConObser}
	\end{figure}
	
	Next we give a relatively ``smarter'' method to made LCN~\eqref{eqn24:synthesisConObser}
	observable. We first choose state-feedback controller
	\begin{equation}\label{eqn32:synthesisConObser}
		\tilde u(t)=\delta_4[1,1,1,1,3,1,1,1]\tilde x(t).
	\end{equation}
	After substituting \eqref{eqn32:synthesisConObser} into \eqref{eqn24:synthesisConObser},
	we obtain the following closed-loop LCN
	\begin{equation}\label{eqn33:synthesisConObser}
		\begin{split}
			\tilde x(t+1) &= \delta_8[1,2,3,6,7,1,3,5]\tilde x(t),\\
			\tilde y(t) &= \delta_4[1,1,1,1,1,2,2,2]\tilde x(t).
		\end{split}
	\end{equation}
	The purpose of choosing \eqref{eqn32:synthesisConObser} is to force the observability 
	graph of the obtained closed-loop LCN~\eqref{eqn33:synthesisConObser} to have no edge from a non-diagonal vertex
	to a diagonal vertex.
	The observability graph of \eqref{eqn33:synthesisConObser} is shown in Fig.~\ref{fig5:synthesisConObser}.
	\begin{figure}[!htbp]
        \centering
\begin{tikzpicture}[shorten >=1pt,auto,node distance=1.5 cm, scale = 1.0, transform shape,
	>=stealth,inner sep=2pt,state/.style={
	rectangle,minimum size=6mm,rounded corners=3mm,
	very thick,draw=black!50,
	top color=white,bottom color=black!50,font=\ttfamily},
	point/.style={rectangle,inner sep=0pt,minimum size=2pt,fill=}]
	\node[state] (68)                               {$68$};
	\node[state] (78) [right of = 68]               {$78$};
	\node[state] (35) [right of = 78]               {$35$};
	\node[state] (67) [right of = 35]               {$67$};
	\node[state] (15) [below of = 68]               {$15$};
	\node[state] (12) [right of = 15]               {$12$};
	\node[state] (23) [right of = 12]               {$23$};
	\node[state] (13) [right of = 23]               {$13$};
	\node[state] (14) [below of = 15]               {$14$};
	\node[state] (24) [right of = 14]               {$24$};
	\node[state] (25) [right of = 24]               {$25$};
	\node[state] (34) [below of = 14]               {$34$};
	\node[state] (45) [right of = 34]               {$45$};
	\node[state] (di) [right of = 25]               {$\diamond$};


	\path [->] (12) edge [loop right] (12)
		  [->] (13) edge [loop left] (13)
		  [->] (23) edge [loop left] (23)
	      [->] (68) edge (15)
		  [->] (78) edge (35) 
		  [->] (67) edge (13)
		  [->] (45) edge (67)
		  [->] (di) edge [loop right] (di)
		  ;
        \end{tikzpicture}
		\caption{Observability graph of LCN~\eqref{eqn33:synthesisConObser}.}
		\label{fig5:synthesisConObser}
	\end{figure}
	There are three self-loops on three non-diagonal vertices in Fig.~\ref{fig5:synthesisConObser}.
	The next step is to remove these three
	self-loops and meanwhile disable appearance of edges from a non-diagonal vertex to a 
	diagonal vertex. Consider the self-loop on vertex $\{\dt_8^1,\dt_8^2\}$. We change
	\eqref{eqn32:synthesisConObser} to the following
	\begin{equation}\label{eqn34:synthesisConObser}
		\tilde u(t)=\delta_4[1,4,1,1,3,1,1,1]\tilde x(t).
	\end{equation} After substituting \eqref{eqn34:synthesisConObser} into \eqref{eqn24:synthesisConObser},
	we obtain the following closed-loop LCN
	\begin{equation}\label{eqn35:synthesisConObser}
		\begin{split}
			\tilde x(t+1) &= \delta_8[1,4,3,6,7,1,3,5]\tilde x(t),\\
			\tilde y(t) &= \delta_4[1,1,1,1,1,2,2,2]\tilde x(t).
		\end{split}
	\end{equation}
	In the observability graph (Fig.~\ref{fig6:synthesisConObser}) of \eqref{eqn35:synthesisConObser},
	there is still no edge from a non-diagonal vertex to a diagonal vertex, but only 
	one of the previous three self-loops on non-diagonal vertices is left.
	\begin{figure}[!htbp]
        \centering
\begin{tikzpicture}[shorten >=1pt,auto,node distance=1.5 cm, scale = 1.0, transform shape,
	>=stealth,inner sep=2pt,state/.style={
	rectangle,minimum size=6mm,rounded corners=3mm,
	very thick,draw=black!50,
	top color=white,bottom color=black!50,font=\ttfamily},
	point/.style={rectangle,inner sep=0pt,minimum size=2pt,fill=}]
	\node[state] (68)                               {$68$};
	\node[state] (78) [right of = 68]               {$78$};
	\node[state] (35) [right of = 78]               {$35$};
	\node[state] (67) [right of = 35]               {$67$};
	\node[state] (15) [below of = 68]               {$15$};
	\node[state] (12) [right of = 15]               {$12$};
	\node[state] (23) [right of = 12]               {$23$};
	\node[state] (13) [right of = 23]               {$13$};
	\node[state] (14) [below of = 15]               {$14$};
	\node[state] (24) [right of = 14]               {$24$};
	\node[state] (25) [right of = 24]               {$25$};
	\node[state] (34) [below of = 14]               {$34$};
	\node[state] (45) [right of = 34]               {$45$};
	\node[state] (di) [right of = 25]               {$\diamond$};


	\path [->] (12) edge (14)
		  [->] (13) edge [loop left] (13)
	      [->] (68) edge (15)
		  [->] (78) edge (35) 
		  [->] (67) edge (13)
		  [->] (45) edge (67)
		  [->] (di) edge [loop right] (di)
	      [->] [bend right] (23) edge (34)
		  ;
        \end{tikzpicture}
		\caption{Observability graph of LCN~\eqref{eqn35:synthesisConObser}.}
		\label{fig6:synthesisConObser}
	\end{figure}
	We furthermore change \eqref{eqn34:synthesisConObser} to the following
	\begin{equation}\label{eqn36:synthesisConObser}
		\tilde u(t)=\delta_4[1,4,2,1,3,1,1,1]\tilde x(t).
	\end{equation} After substituting \eqref{eqn36:synthesisConObser} into 
	into \eqref{eqn24:synthesisConObser},
	we obtain the following closed-loop LCN
	\begin{equation}\label{eqn37:synthesisConObser}
		\begin{split}
			\tilde x(t+1) &= \delta_8[1,4,5,6,7,1,3,5]\tilde x(t),\\
			\tilde y(t) &= \delta_4[1,1,1,1,1,2,2,2]\tilde x(t).
		\end{split}
	\end{equation}
	In the observability graph (Fig.~\ref{fig7:synthesisConObser}) of \eqref{eqn37:synthesisConObser},
	there is no cycle in the non-diagonal subgraph,
	LCN~\eqref{eqn37:synthesisConObser} is observable.
	\begin{figure}[!htbp]
        \centering
\begin{tikzpicture}[shorten >=1pt,auto,node distance=1.5 cm, scale = 1.0, transform shape,
	>=stealth,inner sep=2pt,state/.style={
	rectangle,minimum size=6mm,rounded corners=3mm,
	very thick,draw=black!50,
	top color=white,bottom color=black!50,font=\ttfamily},
	point/.style={rectangle,inner sep=0pt,minimum size=2pt,fill=}]
	\node[state] (68)                               {$68$};
	\node[state] (78) [right of = 68]               {$78$};
	\node[state] (35) [right of = 78]               {$35$};
	\node[state] (67) [right of = 35]               {$67$};
	\node[state] (15) [below of = 68]               {$15$};
	\node[state] (12) [right of = 15]               {$12$};
	\node[state] (23) [right of = 12]               {$23$};
	\node[state] (13) [right of = 23]               {$13$};
	\node[state] (14) [below of = 15]               {$14$};
	\node[state] (24) [right of = 14]               {$24$};
	\node[state] (25) [right of = 24]               {$25$};
	\node[state] (34) [below of = 14]               {$34$};
	\node[state] (45) [right of = 34]               {$45$};
	\node[state] (di) [right of = 25]               {$\diamond$};


	\path [->] (12) edge (14)
		  [->] (13) edge [bend right] (15)
		  [->] (23) edge (45)
	      [->] (68) edge (15)
		  [->] (78) edge (35) 
		  [->] (67) edge (13)
		  [->] (45) edge (67)
		  [->] (di) edge [loop right] (di)
		  ;
        \end{tikzpicture}
		\caption{Observability graph of LCN~\eqref{eqn37:synthesisConObser}.}
		\label{fig7:synthesisConObser}
	\end{figure}
\end{example}

In the above example, in the first method,
we are lucky that the second chosen state-feedback controller~\eqref{eqn28:synthesisConObser} makes
LCN~\eqref{eqn24:synthesisConObser} observable. If we modify $i_1,\dots,i_8$ in another way, 
the second chosen controller may not make \eqref{eqn24:synthesisConObser} observable. However,
due to the existence of \eqref{eqn28:synthesisConObser}, we know that finally we will find a
state-feedback controller (which might not be \eqref{eqn28:synthesisConObser})
that makes \eqref{eqn24:synthesisConObser} observable.

In the second method, when updating the state-feedback controllers (from 
\eqref{eqn32:synthesisConObser} to \eqref{eqn34:synthesisConObser}, then to
\eqref{eqn36:synthesisConObser}), we always have in the observability graphs of
the obtained closed-loop LCNs (from \eqref{eqn33:synthesisConObser} to
\eqref{eqn35:synthesisConObser}, then to \eqref{eqn37:synthesisConObser}),
there is no edge from a non-diagonal vertex
to a diagonal vertex, which is guaranteed by forcing the first $5$ columns of the structure
matrices of these closed-loop LCNs to be distinct and also forcing the last
$3$ columns to be distinct (i.e., \eqref{eqn38:synthesisConObser}). In this sense,
the obtained LCNs are closer and closer
to be observable, and it is very likely that after several steps, the obtained closed-loop
LCN is observable. Hence this method can result in an observability synthesis algorithm
that makes an unobservable LCN become observable (if possible) in a large extent, and the speed is not very slow.

\subsection{An observability synthesis algorithm for LCNs}

The above procedure of modifying state-feedback controllers is similar to the fundamental idea of a
greedy algorithm, i.e., in every step of modification, the purpose is to remove at least one (simple) cycle in
the non-diagonal subgraph of the corresponding observability graph, so that after several steps,
no cycle exists and the obtained closed-loop LCN is observable. Following the procedure, for a given
unobservable LCN~\eqref{LCN3:synthesisConObser} that can be made observable by state feedback, if the
initial state-feedback controller was chosen appropriately, and the following modifications were done 
appropriately, then the procedure could return a state-feedback controller that makes the unobservable
LCN observable. However, because of the high nonlinearity of the observability synthesis problem,
there is no guarantee that such an appropriate initial state-feedback controller could be definitely chosen,
neither for appropriate following steps of modifications. Hence in order to implement the procedure, one
could additionally add the fundamental idea of dynamic programming, i.e., rollback is permitted when in some step,
no modification can reduce the number of cycles. To sum up the hybrid procedure containing the ideas of a 
greedy algorithm and dynamic programming, we show the following Algorithm~\ref{alg1:synthesisConObser} for
observability synthesis.

\vspace{1em}

\begin{breakablealgorithm}
	\vspace{-1.00em}
	\caption{An observability synthesis algorithm}
	\label{alg1:synthesisConObser}
	\begin{algorithmic}[1]
		\REQUIRE An unobservable LCN $\Sig$ as in \eqref{LCN3:synthesisConObser}
		\ENSURE ``Yes'' if $\Sig$ can be made observable by state feedback, ``No'' otherwise;
		in case of ``Yes'', a state-feedback controller as in \eqref{eqn18:synthesisConObser} that makes
		$\Sig$ observable 
		\init\label{item7:synthesisConObser}
		A state-feedback controller $\Ccal$ as in \eqref{eqn18:synthesisConObser} such that the 
		closed-loop LCN $\Sig_{\Ccal}$ as in \eqref{eqn19:synthesisConObser} (obtained by feeding $\Ccal$ into $\Sig$)
		satisfies \eqref{eqn38:synthesisConObser} (this implies that the observability graph of 
		$\Sig_{\Ccal}$ contains no edge from any non-diagonal vertex to any diagonal vertex), and
		a threshold $1\le\alpha\le N$
		\IF{such an initial controller $\Ccal$ does not exist}
		\RETURN ``No''
		\STOP
		\ELSE 
		\IF{$\Sig_{\Ccal}$ is observable (i.e., the non-diagonal subgraph of its observability graph 
		contains no cycle by Lemma~\ref{lem1:observability_aggregation})}
		\RETURN ``Yes'' and $\Ccal$
		\STOP 
		\ELSE
		\WHILE{the current $\Sig_{\Ccal}$ is not observable, and, not all closed-loop LCNs as in 
		\eqref{eqn19:synthesisConObser} satisfying \eqref{eqn38:synthesisConObser} have been tested} 
		\STATE\label{item8:synthesisConObser}
		Choose to modify a number at most $\alpha$ of columns of the structure matrix of $\Ccal$ so that the number 
		of cycles in the non-diagonal subgraph of the observability graph of the updated $\Sig_{\Ccal}$ 
		decreases, $\Sig_{\Ccal}$ has not been tested, and meanwhile $\Sig_{\Ccal}$ still satisfies 
		\eqref{eqn38:synthesisConObser}
		\IF{such a modification does not exist}
		\STATE\label{item9:synthesisConObser}
		move backward to some of the previously tested controllers until such a modification exists
		and then do the modification as in Line~\ref{item8:synthesisConObser} (if after moving to the
		initial state-feedback controller but such a modification still does not exist, then reinitialize the
		procedure as in Line~\ref{item7:synthesisConObser})
		\ENDIF
		\IF{the current $\Sig_{\Ccal}$ is observable}
		\RETURN ``Yes'' and $\Ccal$
		\STOP
		\ENDIF
		\ENDWHILE
		\RETURN ``No''
		\STOP
		\ENDIF
		\ENDIF
	\end{algorithmic}
\end{breakablealgorithm}

Now we analyze Algorithm~\ref{alg1:synthesisConObser}. In Line~\ref{item7:synthesisConObser}, 
by Lemma~\ref{lem1:observability_aggregation}, if $\Sig_{\Ccal}$ does not satisfy 
\eqref{eqn38:synthesisConObser}, then it is not observable. Hence throughout Algorithm~\ref{alg1:synthesisConObser},
all closed-loop LCNs $\Sig_{\Ccal}$ must satisfy \eqref{eqn38:synthesisConObser}. Line~\ref{item8:synthesisConObser}
shows a modification method similar to the idea of a greedy algorithm, i.e., the obtained closed-loop LCN is
closer to be observable than the previous one, because observable closed-loop LCNs $\Sig_{\Ccal}$ are exactly
those containing no cycle in the non-diagonal subgraphs of their observability graphs by 
Lemma~\ref{lem1:observability_aggregation}. Line~\ref{item9:synthesisConObser} is a rollback similar to the 
idea of dynamic programming. It works when the modification in Line~\ref{item8:synthesisConObser} does not
work and not all closed-loop LCNs $\Sig_{\Ccal}$ as in \eqref{eqn19:synthesisConObser} satisfying 
\eqref{eqn38:synthesisConObser} have been tested. Particularly if $\alpha=N$, rollback is not needed.
The nondeterminism of Algorithm~\ref{alg1:synthesisConObser}
comes from Line~\ref{item7:synthesisConObser} (nondeterministically choosing an initial state-feedback controller),
Line~\ref{item8:synthesisConObser} (nondeterministically choosing columns of the structure matrix of the current
controller $\Ccal$), and Line~\ref{item9:synthesisConObser}. Overall, Algorithm~\ref{alg1:synthesisConObser} 
generates a tree structure, in which each node is a pair of a state-feedback controller $\Ccal$ and the corresponding
closed-loop LCN $\Sig_{\Ccal}$ satisfying \eqref{eqn38:synthesisConObser}. In each edge, the number of cycles
in the non-diagonal subgraph of the observability graph of the LCN in the head is smaller than that number 
corresponding to the LCN in the tail.

Algorithm~\ref{alg1:synthesisConObser} could be refined in several ways to change its running performance,
e.g., by adding additional rules, e.g., choosing $\alpha=1$,
setting priorities for columns of the structure matrix of $\Ccal$ to be chosen, etc.

\begin{example}\label{exam8:synthesisConObser}
	Recall the LCN~\eqref{eqn24:synthesisConObser} in Example~\ref{exam6:synthesisConObser}.
	The second method follows the procedure in Algorithm~\ref{alg1:synthesisConObser}, see Table~\ref{tab1:example}.
	\begin{table*}
		\centering
		\begin{tabular}{|c||c|c|c|}
			\hline
			\rowcolor{lightgray}
			STEP & controller $\Ccal$ & closed-loop LCN $\Sig_{\Ccal}$ &
			\begin{tabular}[]{c}
				number of cycles in the non-diagonal subgraph\\
				of the observability graph of $\Sig_{\Ccal}$
			\end{tabular}
			\\\hline
			1 & \eqref{eqn32:synthesisConObser} & \eqref{eqn33:synthesisConObser} & 3\\\hline
			2 & \eqref{eqn34:synthesisConObser} & \eqref{eqn35:synthesisConObser} & 1\\\hline
			3 & \eqref{eqn36:synthesisConObser} & \eqref{eqn37:synthesisConObser} & 0\\\hline
		\end{tabular}
		\caption{The second method in Example~\ref{exam6:synthesisConObser} for synthesizing a state-feedback
		controller that makes LCN~\eqref{eqn24:synthesisConObser} observable which follows the procedure shown in 
		Algorithm~\ref{alg1:synthesisConObser} (rollback is not needed in this example), where all closed-loop
		LCNs \eqref{eqn33:synthesisConObser}, \eqref{eqn35:synthesisConObser}, and \eqref{eqn37:synthesisConObser}
		satisfy \eqref{eqn38:synthesisConObser}, i.e., the non-diagonal subgraphs of their observability graphs
		contain no edge from a non-diagonal vertex to a diagonal vertex. \eqref{eqn37:synthesisConObser} is 
		observable.}
		\label{tab1:example}
	\end{table*}
\end{example}

\section{Conclusion}\label{sec5:conc}

In this paper, we showed that state feedback with exogenous input sometimes can enforce
{\blue or weaken} observability
of a logical control network (LCN). 
We also characterized how to verify whether observability of an LCN can be enforced by
state feedback with exogenous input. In addition, we gave an upper bound on the number of state-feedback
controllers that are needed to be tested in order to verify whether an unobservable LCN can be made
observable by state feedback with exogenous input. Finally, based on the method of obtaining the 
upper bound, an observability synthesis algorithm
was designed by additionally combining the ideas of a greedy algorithm and dynamic programming.
Note that the observability synthesis algorithm is preliminary, a lot of work could be done 
to improve its running performance. 

In this paper, we only studied the synthesis problem for observability of LCNs in the sense of 
Definition~\ref{def1:observability_aggregation}. The other three types of observability 
as in Definitions~\ref{def4_observability}, \ref{def1_observability}, and \ref{def7_observability}
can also be studied by using our observability graph, where in order to study Definitions~\ref{def1_observability} 
and \ref{def7_observability}, one additionally needs to compute deterministic finite automata from
an observability graph as introduced in Section~\ref{subsec:LiterRev}.

In order to make the obtained results be applied to the simulation-based method for controller synthesis 
of hybrid systems over their finite abstractions introduced in the Introduction section,
a future topic is to generalize the obtained results 
to nondeterministic finite-transition systems, since usually nondeterministic finite-transition systems
better simulate hybrid systems.
In addition, another natural generalization of the paper is to consider output-feedback controllers
instead of state-feedback controllers.

\appendix

\section{Concepts and properties related to the STP of matrices}

\begin{definition}
	The matrix $M_{k_r}={\dt}_k^1\oplus\cdots\oplus{\dt}_k^k$ is called
	the \emph{power-reducing matrix}\index{power-reducing matrix}.
	Particularly, we denote $M_{2_r}:=M_r$.
\end{definition}

By definition, the following result holds.

\begin{lemma}[\cite{Cheng2011book}]\label{prop4:STP}
	For power-reducing matrix $M_{k_r}$, we have $$P^2=M_{k_r}P$$ for each $P\in\Delta_k$.
\end{lemma}

\begin{lemma}[\cite{Cheng2011book}]\label{prop2:STP}
  Let $A\in{\mathbb R}^{m\times n}$ and $z\in{\mathbb R}^t$. Then
  \begin{align*}
    A\ltimes z^T&=z^T\ltimes (I_t\otimes A), \\
    z\ltimes A&=(I_t\otimes A)\ltimes z. 
  \end{align*}
\end{lemma}

Next, we reveal the deterministic essence of the three definitions of observability of PBNs studied in
\cite{Zhao2015ObservabilityPBN,Zhou2019ObservabilityPBN,Fornasini2020ObserReconPBN,Yu2021ObservabilityBooleanNetwork}.

\begin{remark}\label{rem6:synthesisConObser}  
	The PBNs studied in \cite{Zhao2015ObservabilityPBN,Zhou2019ObservabilityPBN,Fornasini2020ObserReconPBN,Yu2021ObservabilityBooleanNetwork} are as follows:
	\begin{equation}\label{PBN:synthesisConObser}
		\begin{split}
			x(t+1) &= L_{\sigma(t)} x(t),\\
			y(t) &= Hx(t),
		\end{split}
	\end{equation}
	where $\sigma:\N\to\llb 1,s \rrb$ with $s\in\Z_+$ is an independent and identically distributed process
	and at each time $t\in\N$, $\Prob\{\sigma(t)=i\}=p_i$, $i\in\llb 1,s \rrb$, with $p_i>0$\footnote{In
	\cite{Zhao2015ObservabilityPBN,Zhou2019ObservabilityPBN,Fornasini2020ObserReconPBN,Yu2021ObservabilityBooleanNetwork},
	the authors considered $p_i\ge0$. Here we consider $p_i>0$ with loss of generality because all 
	structure matrices $L_{i}$ with $p_i=0$ could be removed equivalently.} and $\sum_{i=1}^{s}
	p_i=1$, $[p_1,\dots,p_s]=:{\bf p}$ is the probability distribution of $\sigma$; 
	$L_1,\dots,L_s\in\LM_{2^n\times 2^n}$ are the system matrices; $H\in\LM_{2^q\times 2^n}$ is 
	the output matrix; $x(t)\in\Dt_{2^n}$; $y(t)\in\Dt_{2^q}$.

	Because $\sigma$ is independent and identically distributed, \eqref{PBN:synthesisConObser} can be reformulated
	as the following BCN:
	\begin{equation}\label{PBN_deterministic:synthesisConObser}
		\begin{split}
			x(t+1) &= [L_1,\dots,L_s] u(t) x(t),\\
			y(t) &= Hx(t),
		\end{split}
	\end{equation}
	where $u(t)\in\Dt_s$, the other variables are the same as above.

	Let ${\bf x}(\theta;\sigma,x_0)$ (resp., ${\bf y}(\theta;\sigma,x_0)$) be any of the admissible state
	(resp., output) sequences of \eqref{PBN:synthesisConObser}
	starting from initial state $x_0\in\Dt_{2^n}$ on discrete time set $\llb 0,\theta \rrb$.

	A PBN~\eqref{PBN:synthesisConObser}  is called observable in probability
	on $\llb 0,\theta \rrb$ with $\theta\in\N$ \cite{Zhao2015ObservabilityPBN} if for every two different $x_0,x_0'\in\Dt_{2^n}$,
	it holds that\footnote{Note
	that in ${\bf y}(\theta;\sigma,x_0)$ and ${\bf y}(\theta;\sigma,x_0')$, the only difference lies in 
	the different initial states. At each time step, in both of them, the switching signal $\sigma$
	takes the same value. Hence $\Prob\{{\bf y}(\theta;\sigma,x_0) \ne {\bf y}(\theta;\sigma,x_0)\}=0$
	for any $x_0\in\Dt_{2^n}$. Note also that $\Prob\{{\bf y}(\theta;\sigma,x_0) \ne {\bf y}(\theta;\sigma,x_0')\}$
	increases as $\theta$ increases and its limit need not be reached in finite time steps.}
	\begin{align}\label{eqn40:synthesisConObser}
		\Prob\{{\bf y}(\theta;\sigma,x_0) \ne {\bf y}(\theta;\sigma,x_0')\}>0.
	\end{align}

	By definition, \eqref{eqn40:synthesisConObser} holds, if and only if, in the corresponding BCN
	\eqref{PBN_deterministic:synthesisConObser}, $x_0$ and $x_0'$ have a distinguishing input sequence of 
	length $\theta$. 
	Hence a PBN
	\eqref{PBN:synthesisConObser} is observable in probability on $\llb 0,\theta \rrb$ with
	$\theta\in\N$ if and only if the BCN \eqref{PBN_deterministic:synthesisConObser} satisfies
	Definition~\ref{def4_observability} and additionally all pairs of different initial states $x_0,x_0'$
	have a distinguishing input sequence of fixed length $\theta$.

	A PBN~\eqref{PBN:synthesisConObser} is called finite-time observable in	probability \cite{Zhou2019ObservabilityPBN}
	if there is $\theta\in\N$ such that \eqref{PBN:synthesisConObser} is observable in probability on $\llb 0,\theta
	\rrb$. Then a PBN \eqref{PBN:synthesisConObser} is finite-time observable in probability if and only
	if the BCN \eqref{PBN_deterministic:synthesisConObser} satisfies Definition~\ref{def4_observability}.

	A PBN~\eqref{PBN:synthesisConObser} is called asymptotically observable in distribution 
	\cite{Zhou2019ObservabilityPBN} if for  every two different $x_0,x_0'\in\Dt_{2^n}$,
	it holds that
	\begin{align}
		\lim_{\theta\to\infty}\Prob\{{\bf y}(\theta;\sigma,x_0) \ne {\bf y}(\theta;\sigma,x_0')\}=1.
	\end{align}

	If $x_0$ and $x_0'$ have no distinguishing input sequence in \eqref{PBN_deterministic:synthesisConObser},
	then $\Prob\{{\bf y}(\theta;\sigma,x_0) \ne {\bf y}(\theta;\sigma,x_0')\}=0$ for any
	$\theta\in\N$. Hence in order to make \eqref{PBN:synthesisConObser} asymptotically observable in distribution,
	every pair of different states must have at least one distinguishing input sequence in 
	\eqref{PBN_deterministic:synthesisConObser}, i.e., \eqref{PBN_deterministic:synthesisConObser} satisfies 
	Definition~\ref{def4_observability}. One can see that \eqref{PBN_deterministic:synthesisConObser} satisfies 
	Definition~\ref{def4_observability} if and only if\footnote{This necessary and sufficient 
	condition is exactly the result shown in {\cite[Theorem~3.7]{Zhang2016ObservabilityofBCN},
	\cite[Theorem~5.7]{Zhang2014ObservabilityofBCNCCC}},
	\cite[Theorem~3.5]{Cheng2016NoteonObservabilityBCN}, and \cite[Algorithm~1]{Zhu2018ObservabilityBCN}.} in the observability graph of
	\eqref{PBN_deterministic:synthesisConObser}, for every non-diagonal vertex $v$, either $\outdeg(v)<2^n$\footnote{In this case, any $u$ in
	$\Dt_s\setminus\bigcup_{(v,\bar v)\in\E}\W((v,\bar v))$ is a distinguishing input sequence
	of $x_0$ and $x_0'$, where $\{x_0,x_0'\}=v$.}
	or some vertex $v'$ with $\outdeg(v')<2^n$ is reachable from $v$\footnote{
	In this case, any input sequence $Uu$ is a distinguishing input sequence of $x_0$ and $x_0'$, where
	$\{x_0,x_0'\}=v$, $U$ is the input sequence of any path from $v$ to $v'$, $u\in
	\Dt_s\setminus\bigcup_{(v',\bar v)\in\E}\W((v',\bar v))$.}.
	In order to make \eqref{PBN:synthesisConObser} asymptotically observable in distribution, one additionally
	must have in the observability graph of \eqref{PBN_deterministic:synthesisConObser}, there is no path from any 
	non-diagonal vertex to any diagonal vertex, because for a non-diagonal vertex $v=\{x_0,x_0'\}$ from which 
	there is a path to some diagonal vertex, there exists $\beta<1$ such that for any $\theta$
	greater than the length of the path, $\Prob\{{\bf y}(\theta;\sigma,x_0) \ne {\bf y}(\theta;\sigma,x_0')\}
	\le\beta$. Then $\lim_{\theta\to\infty}\Prob\{{\bf y}(\theta;\sigma,x_0) \ne {\bf y}(\theta;\sigma,x_0')\}
	\le\beta<1$.
	
	Conversely, assume that (\romannumeral1) \eqref{PBN_deterministic:synthesisConObser} satisfies 
	Definition~\ref{def4_observability} and (\romannumeral2) in the observability graph $\Gr_o:=(\V,\E,\W)$ of
	\eqref{PBN_deterministic:synthesisConObser},
	there is no path from any non-diagonal vertex to any diagonal vertex. We endow the edges of $\Gr_o$ with
	probabilities according to the probability distribution ${\bf p}=[p_1,\dots,p_s]$ as in 
	\eqref{PBN:synthesisConObser}: for all $v,v'\in\V$ with $(v',v)\notin\E$, $p_{v,v'}=0$;
	for all $v,v'\in\V$ with $(v',v)\in\E$, $p_{v,v'}=\sum_{i\in\llb 1,s \rrb, \dt_s^i\in\W((v',v))} p_i$.
	Denote the set of diagonal vertices and the set of non-diagonal vertices of $\Gr_o$ by $\V_d$
	and $\V_{nd}$, respectively. Then by (\romannumeral2), for all $v\in\V_{d}$ and $v'\in\V_{nd}$, $p_{v,v'}=0$.
	Denote the adjacency matrix of the non-diagonal subgraph of $\Gr_o$ by $M_{\V_{nd}}=(p_{v,v'})_{v,v'\in\V_{nd}}$.
	Then also by (\romannumeral2), the sum of the $v$-th column of $(M_{\V_{nd}})^{\theta}$ is equal to $\Prob\{
	{\bf y}(\theta;\sigma,x_0) = {\bf y}(\theta;\sigma,x_0')\}$, where $\{x_0,x_0'\}=v$. By (\romannumeral1)
	(i.e., for every $\{x_0,x_0'\}$ in $\V_{nd}$, $x_0$ and $x_0'$ have a distinguishing input sequence in
	$\Gr_o$), there exists $\bar\theta\in\Z_+$ such that in $(M_{\V_{nd}})^{\bar\theta}$, the sum of each column 
	is less than $1$. Then the spectral radius of $(M_{\V_{nd}})^{\bar\theta}$ is less than $1$, so is $M_{\V_{nd}}$. 
	Hence $\lim_{\theta\to\infty}(M_{\V_{nd}})^{\theta}$ has all entries equal to $0$. That is, for all $x_0,x_0'$
	with $(x_0,x_0')\in\V_{nd}$, $\lim_{\theta\to\infty}\Prob\{{\bf y}(\theta;\sigma,x_0) = {\bf y}(\theta;\sigma,
	x_0')\}=0$, $\lim_{\theta\to\infty}\Prob\{{\bf y}(\theta;\sigma,x_0) \ne {\bf y}(\theta;\sigma,
	x_0')\}=1$. For all $x_0,x_0'$ with $Hx_0\ne Hx_0'$, $\Prob\{{\bf y}(\theta;\sigma,x_0) \ne {\bf y}(\theta;\sigma,
	x_0')\}=1$ for any $\theta\in\N$. Then \eqref{PBN:synthesisConObser} 
	is asymptotically observable in distribution. 

	Based on the above discussion, a PBN \eqref{PBN:synthesisConObser} is asymptotically observable in distribution
	if and only if the corresponding BCN \eqref{PBN_deterministic:synthesisConObser} satisfies 
	Definition~\ref{def4_observability} and in the observability graph of \eqref{PBN_deterministic:synthesisConObser},
	there is no path from any non-diagonal vertex to any diagonal vertex\footnote{This is exactly the result in 
	\cite[Proposition~1]{Yu2021ObservabilityBooleanNetwork}.}.

	The above three conclusions show that the three definitions of observability in probability
	on $\llb 0,\theta \rrb$ with $\theta\in\N$, finite-time observability in probability, and asymptotic 
	observability in distribution are rather close to each other and do not depend on probability distributions 
	of the stochastic switching
	signal $\sigma$. Formally, given two probability distributions ${\bf p}^i$, $i=1,2$, for 
	PBN~\eqref{PBN:synthesisConObser}, one has PBN~\eqref{PBN:synthesisConObser} with ${\bf p}={\bf p}^1$ is
	observable if and only if PBN~\eqref{PBN:synthesisConObser} with ${\bf p}={\bf p}^2$ is observable,
	both in the sense of any one of the above three definitions.

Moreover, \cite[Lemma~3]{Yu2021ObservabilityBooleanNetwork} is exactly 
\cite[Theorem~6]{Moore1956}, and the central idea therein is the equivalence relation $\sim_k$ for a BCN 
for which two states have 
the relation if and only if they do not have any distinguishing input sequence of length $k$, also see
\cite[Remark~4.1]{Zhang2020bookDDS}.
This idea was also used in \cite{Fornasini2013ObservabilityReconstructibilityofBCN} (see Eqn. (4) therein)
to study 
observability and reconstructibility of BCNs, as well as in
\cite{Li2021EquivalentConditionDDP_BCN,Li2021ObservabilityDecompositionBCN}
to give a necessary and sufficient condition for the disturbance decoupling problem and to solve the observability
(Definition~\ref{def4_observability}) decomposition problem, in BCNs. In addition, there exist two mistakes in 
\cite[Table~\rom{2}]{Yu2021ObservabilityBooleanNetwork}, the time complexity of Algorithm~3.4 of [16]
in the table (i.e., of \cite{Cheng2016NoteonObservabilityBCN} in the current paper)
is not $O(2^{6n})$ but $O(2^{2n+m})$, the time complexity of Algorithm~1 of [18] in
the table (i.e., of \cite{Zhu2018ObservabilityBCN} in the current paper) is
not $O(2^{4n+m})$ but also $O(2^{2n+m})$. That is, the algorithms obtained in 
\cite{Cheng2016NoteonObservabilityBCN,Zhu2018ObservabilityBCN} are more efficient than 
\cite[Algorithm~1]{Yu2021ObservabilityBooleanNetwork}.

In \cite{Yu2021ObservabilityBooleanNetwork}, observability of switched Boolean networks was 
mentioned. As one can easily see, observability of switched Boolean control networks can be equivalently
transformed to observability of BCNs if inputs and switching signals of switched Boolean control networks
are with the same quantifier (either $\exists$ or $\forall$). For a given 
switched Boolean control network, one could regard the Cartesian product
of the set of inputs and the codomain of the switching signal as the new set of inputs so that a new
equivalent Boolean control network is obtained. The results in 
\cite{Zhang2016ObservabilitySBCN_NAHS} showed that Definition~\ref{def7_observability} extended to switched 
Boolean control networks for which the switching signals are with the $\exists$ quantifier is actually
Definition~\ref{def7_observability} of BCNs. As a sequence, controllability of switched Boolean control networks
studied in \cite{Li2012ControllabilitySwitchBCN} (in which inputs and switching signals are both with
$\exists$ quantifier) is actually controllability of BCNs.
\end{remark}

\end{document}